\documentclass[10pt]{article}
\usepackage{amsthm}
\usepackage{latexsym}
\usepackage{amsmath}
\usepackage{amssymb}
\usepackage{epsfig}

\parindent 1cm
\parskip 0.2cm
\topmargin 0.2cm \oddsidemargin 0.5cm \evensidemargin 0.5cm
\textwidth 15cm \textheight 21cm

    \newtheorem{thm}{Theorem}[section]
    \newtheorem{prop}[thm]{Proposition}
    \newtheorem{lemma}[thm]{Lemma}
    
    \newtheorem{cor}[thm]{Corollary}

    \newtheorem{defn}[thm]{Definition}

    \newtheorem{rem}[thm]{Remark}
    \newtheorem{example}[thm]{Example}

\begin{document}

\title{Cellular resolutions of cointerval ideals}

\author{Anton Dochtermann\footnote{Supported by an Alexander von Humboldt postdoctoral fellowship.}\\ Department of Mathematics \\ Dartmouth College \\ Hanover, NH 03755 \\ {\tt anton.dochtermann@gmail.com }
    \and Alexander Engstr\"{o}m\footnote{Alexander Engstr\"{o}m is a Miller Research Fellow 2009-2012 at UC Berkeley, and gratefully acknowledges support from the Adolph C. and Mary Sprague Miller Institute for Basic Research in Science.} \\  Department of Mathematics \\ U.C. Berkeley \\
Berkeley, CA 94720 \\ {\tt alex@math.berkeley.edu}}

\date\today

\maketitle

\abstract{Minimal cellular resolutions of the edge ideals of \emph{cointerval} hypergraphs are constructed.  This class of $d$--uniform hypergraphs coincides with the complements of interval graphs (for the case $d=2$), and strictly contains the class of `strongly stable' hypergraphs corresponding to pure shifted simplicial complexes.  The polyhedral complexes supporting the resolutions are described as certain spaces of directed graph homomorphisms, and are realized as subcomplexes of mixed subdivisions of the Minkowski sums of simplices.  Resolutions of more general hypergraphs are obtained by considering decompositions into cointerval hypergraphs.}

\section{Introduction} \label{sec:intro}

An \emph{edge ideal} $I_H$ is an ideal in a polynomial ring generated by squarefree monomials of a fixed degree $d$ (the generators can be thought of as edges of a $d$-uniform hypergraph $H$, hence the name).  The study of edge ideals has recently enjoyed a surge of activity, and the most well-known results in this area relate algebraic properties of edge ideals to the combinatorial structure of the underlying (class of) graphs.

In this paper we study \emph{resolutions} of edge ideals, and in particular give explicit descriptions of minimal cellular resolutions for edge ideals of a large class of hypergraphs.  Given any ideal $I$ in a polynomial ring $S = k[x_1, \dots, x_n]$, a \emph{resolution} of $I$ is an exact chain complex of free $S$-modules describing the generators, the relations, the relations among the relations (and so on) of the ideal $I$.  A \emph{cellular} resolution encodes these modules and maps as the chain complex computing the homology of a labeled polyhedral complex.

Our main result is the construction and explicit embedding of a labeled polyhedral complex $X_H$ which supports a minimal cellular resolutions of the edge ideal of $H$, whenever $H$ is what we call a \emph{cointerval} hypergraph.  We refer to Theorem \ref{thm:resCoInt} and Proposition \ref{prop:embedding} for precise formulations.  The class of cointerval $d$--graphs can be seen to coincide with the complements of interval graphs (for the case $d=2$), and in general strictly contains the class of `strongly stable' hypergraphs corresponding to pure shifted simplicial complexes.  Hence our constructions can be seen as an extension of the results of Corso and Nagel \cite{CN,CN1} and Nagel and Reiner \cite{NR}, where cellular resolutions of strongly stable edge ideals (and related ideals) are considered.  Our constructions are also somewhat more explicit, in the sense that we obtain particular geometric embeddings of the complexes $X_H$.  In particular we realize each $X_H$ as a subcomplex of a certain \emph{mixed subdivision} of a dilated simplex.

The facial structure of $X_H$ is given by simple graph-theoretic data coming from the hypergraph $H$, and this allows us to provide transparent descriptions certain algebraic invariant including Betti numbers, etc., of $H$.  Furthermore, we can use the explicit description of the complexes to provide (not necessarily minimal) cellular resolutions of arbitrary hypergraphs by considering decompositions into cointerval graphs.  Our results in this area are all independent of the characteristic of the coefficient field.

The rest of the paper is organized as follows.  In Section \ref{sec:Def} we review the basic definitions relating to edge ideals and cellular resolutions.  In Section \ref{sec:complex} we describe the complexes $X_H$ that will support our resolutions, and establish some results regarding their topology.  We provide the definition of cointerval graphs in Section \ref{sec:resolution}.  Here we also state and prove our main result, namely that the complex $X_H$ supports a minimal cellular resolutions of the ideal $I_H$, whenever $H$ is a cointerval hypergraph.  In Section \ref{sec:mixed} we describe how the complexes $X_H$ can be realized as subcomplexes of certain well-studied mixed subdivisions of dilated simplices.  We consider resolutions of more general hypergraph edge ideals in Section \ref{sec:gluing}, and show how decompositions into cointerval subgraphs $H = H_1 \cup H_2$ leads to cellular resolutions obtained by gluing together the associated $X_{H_i}$.  Here we also provide a thorough analysis of all 3-graphs on at most 5 vertices to illustrate our methods.  We end in Section \ref{sec:further} with some comments regarding open questions and further study.

\section{Definitions} \label{sec:Def}

We briefly discuss the main objects involved in our study.  We begin with some graph-theoretic notions.  For a finite subset $V \subseteq \mathbb{Z}$, a \emph{(uniform) $d$--hypergraph} (or simply \emph{$d$--graph}) $H$ with \emph{vertex set $V(H) = V$} is a collection of subsets of $V$ (called edges), each of which has cardinality $d$.  We will often take $V = [n] := \{1, \dots, n\}$ and will suppress set notation in describing our edges, so that e.g. $245$ will denote the edge $\{2,4,5\}$.  The \emph{complete} $d$--hypergraph $K^d_n$ is the $d$--hypergraph on $[n]$ consisting of all possible $d$--subsets.  Note that by definition our graphs come with integer labels on the vertices.  If we want to consider the underlying (unlabeled) graph we will emphasize this distinction.  In dealing with $d$--graphs we will often be interested in considering induced $(d-1)$-subgraphs in the following sense.

\begin{defn}
Let $H$ be a $d$--graph and let $v \in V(H) \subseteq \mathbb{Z}$ be some vertex. Then the \emph{$v$--layer} of $H$ is a $(d-1)$--graph on $V\setminus v$ with edge set
\[ \{ v_1v_2\cdots v_{d-1} \mid vv_1v_2\cdots v_{d-1} \in E(H)\textrm{ and }v<v_1,v_2,\ldots ,v_{d-1} \}. \]
\end{defn}

Note that if $H$ is a 2-graph then one can think of the $v$--layer as simply the entries to the right of the $v$-column in the $v$-row of the adjacency matrix of $H$, the `edges' of the resulting 1-graph are simply the entries that have a nonzero entry.

If $W \subseteq V(H)$ is a subset of the vertices of a $d$-graph $H$, the \emph{induced} subgraph on $W$, denoted $H[W]$ (or sometimes simply $W$ if the context is clear) is the $d$-graph with vertex set $W$ and edges $\{E \subseteq W: E \in E(H)\}$.

We next turn to the algebraic notions, and refer to \cite{MS} for undefined terms and further discussion.  Throughout the paper we let $k$ denote a field, our results will be independent of the characteristic.  Given a $d$--graph $H$ on the vertices $V(H) = \{v_1, \dots, v_n\}$, the \emph{edge ideal} $I_H$ is by definition the monomial ideal in the polynomial ring $k[x_{v_1}, \dots, x_{v_n}]$ generated by the monomials corresponding to the edges of $H$,
\[ I_H = \langle \prod_{j=1}^d x_{{v_i}_j} \mid v_{i_1}v_{i_2}\cdots v_{i_d} \in E(H) \rangle. \]
\noindent
We will usually take $V(H) = [n]$, so that $S = k[x_1, \dots, x_n]$, but it will be convenient to have the more general setup as well.

We will sometimes employ the Stanley-Reisner theory of face rings of simplicial complexes, and in this context we let $\Delta$ denote a simplicial complex on the vertices $[n]$.  The \emph{Stanley-Reisner ideal} of $\Delta$, which we denote $I_{\Delta}$, is the ideal in $S$ generated by all monomials $x_{\sigma}$ corresponding to nonfaces $\sigma \notin \Delta$.   We let $R_{\Delta} = S/I_{\Delta}$, and recall that $\dim R_{\Delta}$, the (Krull) dimension of $R_{\Delta}$, is equal to $\dim(\Delta) + 1$.  We point out that the edge ideal of a $d$--hypergraph is the special cases of a Stanley-Reisner ideal generated in a fixed degree $d$.  We recover the simplicial complex $\Delta$ as $Ind(H)$, the \emph{independence complex} of the hypergraph $H$.

As monomial ideals, the edge ideals $I_H$ are endowed with a fine ${\mathbb Z}^n$--grading coming from the ${\mathbb Z}^n$--grading on $S$.  We will sometimes abuse notation and use $\alpha \in {\mathbb Z}^n$ to denote both a monomial degree (i.e. a vector in ${\mathbb Z}^n$), as well as a monomial with that degree.  For example, if $n = 6$ and if $235$ is an edge in $H$, the corresponding monomial $x_2x_3x_5$ will be regarded as the vector $(011010) \in {\mathbb Z}^6$.  In this paper we will be interested in finely graded resolutions of the $S$-module $I_H$.  If

\[{\mathcal F}: 0 \rightarrow \displaystyle{\bigoplus_{\alpha} S[-\alpha]^{\beta_{\ell, \alpha}}} \rightarrow \cdots
\rightarrow \displaystyle{\bigoplus_{\alpha} S[-\alpha]^{\beta_{0, \alpha}}} \rightarrow I_H \rightarrow 0 \]

\noindent
is a \emph{minimal} free resolution of $I_H$, then for $i \in {\mathbb N}$ and $\alpha \in {\mathbb Z}^n$ the numbers $\beta_{i,\alpha}$ are independent of the resolution and are called these \emph{finely graded Betti numbers} of $I_H$.  The \emph{coarsely graded} Betti numbers are of $I_H$ are given by $\beta_{i,j} = \sum_{|\alpha|=j} \beta_{i,\alpha}$.  The number $\ell$ (the length of a minimal resolution) is called the \emph{projective dimension} of $I_H$, which we will denote $\textrm{pdim}(I_H)$.  One can check that $\textrm{pdim}(S/I_H) = \textrm{pdim}(I_H) + 1$, and by the Auslander-Buchsbaum formula, we have $\dim S - \textrm{depth}(S/I_H) = \textrm{pdim}(S/I_H)$.  The ideal $I_H$ is said to have a \emph{$d$--linear} resolution if $\beta_{i,j} = 0$ whenever $j-i \neq d-1$.  A ring $R = S/I$ is \emph{Cohen-Macaulay} if $\dim R = \textrm{depth}\, R$.

\begin{rem}
As is typical in this area, when dealing with edge ideals of graphs we will often say that $H$ has a certain algebraic property (e.g., `$H$ has a linear resolution'), by which we mean that the edge ideal $I_H$ has this property.
\end{rem}

We will be interested in resolutions of the edge ideals $I_H$ which are supported on geometric complexes.  Given an oriented polyhedral complex $X$ with monomial labels on the faces, one constructs ${\mathcal F}_X$, a free graded chain complex of $S$-modules which computes the cellular homology of $X$.  Under certain circumstances (see Proposition \ref{prop:cellres}) this algebraic complex is a resolution of the ideal generated by the monomials corresponding to the labels of the vertices.  This notion of a \emph{cellular resolution} was introduced by Bayer and Sturmfels in \cite{BS} and generalizes several well-known resolutions of monomial ideals including the Taylor resolution and the Hull resolution.  We will often use the following criteria (taken from \cite{MS}) as a way to check whether a labeled complex supports a cellular resolution of the associated ideal.  Here for any $\alpha \in {\mathbb Z}^n$ we use the notation $X_{\leq \alpha}$ to denote the subcomplex of $X$ induced by those faces with monomial labels which divide $\alpha$.

\begin{prop} \label{prop:cellres}
Suppose $X$ is a complex with vertices labeled by monomials, and label the higher dimensional faces $F$ with $\textrm{lcm} \{\ell(v):v \in F\}$, the least common multiple of the labels $\ell(v)$ on the vertices $v$ of $F$.  Then the cellular free complex ${\mathcal F}_X$ is a cellular resolution if and only if $X_{\leq \alpha}$ is acyclic over $k$ for all $\alpha \in {\mathbb Z}^n$, in which case it is a free resolution of the ideal generated by all monomials corresponding to the vertex labels.  Furthermore, the resolution is minimal if whenever $F \subsetneq G$ is a strict inclusion of faces, the monomial labels on those faces differ.
\end{prop}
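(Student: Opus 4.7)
The plan is to identify the $\mathbb{Z}^n$-graded strands of the chain complex $\mathcal{F}_X$ with cellular chain complexes of the subcomplexes $X_{\leq \alpha}$, and then invoke standard properties of minimal $\mathbb{Z}^n$-graded free resolutions over $S$. This is essentially the Bayer--Sturmfels criterion, so I expect no serious obstacles; the work lies in carefully unpacking the $\alpha$-graded piece of $\mathcal{F}_X$.

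First I would write $\mathcal{F}_X$ explicitly. By construction, the module in homological degree $i$ is $F_i = \bigoplus_{\dim F = i} S[-\ell(F)]$, and the differential sends the generator $[F]$ corresponding to a face $F$ to $\sum \pm (\ell(F)/\ell(F')) [F']$, summed over codimension-$1$ faces $F' \subset F$, with signs coming from a chosen orientation. The lcm-labeling ensures that $F' \subseteq F$ implies $\ell(F')$ divides $\ell(F)$, so these coefficients are honest monomials in $S$. The augmentation $F_0 \to I_H$ is defined by $[v] \mapsto \ell(v)$.

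Next I would fix $\alpha \in \mathbb{Z}^n$ and extract the $\alpha$-graded strand. Since the $\alpha$-component of $S[-\ell(F)]$ is one-dimensional when $\ell(F)$ divides $x^\alpha$ and zero otherwise, $(F_i)_\alpha$ is naturally identified with the $i$-th cellular chain group over $k$ of $X_{\leq \alpha}$; moreover the monomial ratios $\ell(F)/\ell(F')$ all become the scalar $1$ on this strand, so the differential is exactly the cellular boundary. If $x^\alpha \notin I_H$, then no vertex label divides $x^\alpha$, so $X_{\leq \alpha}$ is empty and the strand is identically zero. If $x^\alpha \in I_H$, the augmented strand is the reduced cellular chain complex of $X_{\leq \alpha}$ with $k$-coefficients. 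Since a $\mathbb{Z}^n$-graded complex of free $S$-modules is exact iff each graded strand is, $\mathcal{F}_X$ is a resolution of $I_H$ precisely when every $X_{\leq \alpha}$ has vanishing reduced homology over $k$, which is the stated acyclicity condition.

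For minimality I would invoke the standard criterion that a $\mathbb{Z}^n$-graded free resolution over $S$ is minimal iff every nonzero entry of each differential matrix lies in the graded maximal ideal $(x_1,\ldots,x_n)$. In $\mathcal{F}_X$ the nonzero entries are the monomials $\ell(F)/\ell(F')$ arising from codimension-$1$ inclusions $F' \subsetneq F$, and such a monomial lies in the maximal ideal iff $\ell(F) \neq \ell(F')$. Since labels multiply divisibly along any chain of strict inclusions $F = F_0 \subsetneq F_1 \subsetneq \cdots \subsetneq F_k = G$, the condition that $\ell(F) \neq \ell(G)$ for \emph{every} strict inclusion is equivalent to the codimension-$1$ version, completing the minimality half of the statement.
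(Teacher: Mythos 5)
Your argument is correct and complete: it is the standard Bayer--Sturmfels strand-by-strand argument, identifying the $\alpha$-graded piece of ${\mathcal F}_X$ with the reduced cellular chain complex of $X_{\leq \alpha}$ over $k$ and then applying the maximal-ideal criterion for minimality. The paper supplies no proof of this proposition --- it is quoted directly from \cite{MS} --- and your proof is precisely the one given there, so there is nothing to flag beyond noting agreement.
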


Also from \cite{MS} we have the following.  For any $\alpha \in {\mathbb Z}^n$ we here use $X_{< \alpha}$ to denote the subcomplex of $X$ given by all faces with labels strictly less than $\alpha$.

\begin{prop} \label{prop:cellbetti}
Let $X$ be a cellular resolution of an ideal $I$, For $i \geq 1$ and $\alpha \in {\mathbb Z}^n$ the finely graded Betti numbers of $I$ are given by
\[ \beta_{i, \alpha} = H_{i-1} (X_{< \alpha}; k).\]
\end{prop}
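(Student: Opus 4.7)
The plan is to compute the finely graded Betti numbers via the standard formula $\beta_{i,\alpha}(I) = \dim_k \textrm{Tor}^S_i(I,k)_\alpha$, using $\mathcal{F}_X$ as the free resolution of $I$ in the Tor computation. First I would unpack the definition of $\mathcal{F}_X$: in homological degree $i$ it equals $\bigoplus_F S(-m_F)$, where $F$ ranges over the $i$-dimensional faces of $X$ and $m_F$ denotes the label. Tensoring with $k = S/\mathfrak{m}$ turns each summand $S(-m_F)$ into a single copy of $k$ concentrated in multidegree $m_F$, so the $\alpha$-graded component of $(\mathcal{F}_X \otimes_S k)_i$ has a basis indexed by exactly those $i$-faces $F$ with $m_F = \alpha$.

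The next step is to identify this complex combinatorially. The set of $i$-faces with label equal to $\alpha$ is precisely the difference between the $i$-faces in $X_{\leq \alpha}$ and those in $X_{<\alpha}$, and the differential of $\mathcal{F}_X$ in multidegree $\alpha$ agrees up to sign with the cellular boundary. Thus $(\mathcal{F}_X \otimes_S k)_\alpha$ is naturally isomorphic, as a chain complex of $k$-vector spaces, to the relative cellular chain complex $C_\bullet(X_{\leq \alpha}, X_{<\alpha}; k)$, giving $\textrm{Tor}^S_i(I,k)_\alpha \cong H_i(X_{\leq \alpha}, X_{<\alpha}; k)$.

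Finally I would pass from relative to absolute (reduced) homology via the long exact sequence of the pair $(X_{\leq \alpha}, X_{<\alpha})$. By Proposition \ref{prop:cellres}, the very hypothesis that $\mathcal{F}_X$ is a cellular resolution forces $X_{\leq \alpha}$ to be acyclic over $k$. Feeding this vanishing into the long exact sequence collapses it into isomorphisms $H_i(X_{\leq \alpha}, X_{<\alpha}; k) \cong \tilde H_{i-1}(X_{<\alpha}; k)$ for each $i \geq 1$, which is the stated formula (with the understanding that $H_{i-1}$ in the statement denotes reduced homology in the case $i=1$).

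The argument is essentially homological bookkeeping; the only delicate point is verifying that the differential of $\mathcal{F}_X$, when restricted to the $\alpha$-graded piece after tensoring with $k$, really coincides with the cellular boundary on $X_{\leq \alpha}$ modulo $X_{<\alpha}$. This is built into the construction of $\mathcal{F}_X$ from an oriented polyhedral complex, but it is the one place where signs and orientations must be tracked carefully; once this identification is in hand, the acyclicity input from Proposition \ref{prop:cellres} and a single application of the long exact sequence finish the proof.
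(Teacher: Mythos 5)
Your argument is correct, but note that the paper offers no proof of this proposition at all --- it is quoted directly from Miller and Sturmfels \cite{MS}. What you have written is precisely the standard argument found there: compute $\mathrm{Tor}^S_i(I,k)_\alpha$ from the (not necessarily minimal) resolution ${\mathcal F}_X$, identify the $\alpha$-graded strand of ${\mathcal F}_X \otimes_S k$ with the relative cellular chain complex of the pair $(X_{\leq \alpha}, X_{<\alpha})$ because the monomial coefficients in the differential die exactly on the components that drop the label, and then use acyclicity of $X_{\leq\alpha}$ (guaranteed by Proposition \ref{prop:cellres}) in the long exact sequence of the pair. Your closing caveats --- that the homology must be read as reduced for the $i=1$ case, and that the identification of the tensored differential with the cellular boundary is where orientations must be checked --- are exactly the right points to flag; there is no gap.
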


If a labeled complex $X$ supports a \emph{minimal} resolution of an ideal $I$, then for any $i \in {\mathbb N}$ and $\alpha \in {\mathbb N}^n$ the Betti numbers $\beta_{i,\alpha}$ can read off from the labeled complex directly.  This follows from the fact that each $i$-face of $X$ with label $\alpha$ contributes a term $S[-\alpha]$ in homological degree $i$ to the complex ${\mathcal F}_X$.

\section{The labeled complex and some properties} \label{sec:complex}

In this section we associate a polyhedral complex $X_H$ to any $d$-graph.  In what follows, a simplex with vertex set $V$ is denoted $\Delta_V$.  Also, for subsets $\sigma, \tau \subseteq V \subseteq {\mathbb Z}$, we use $\sigma < \tau$ to denote $s < t$ for all $s \in \sigma$ and $t \in \tau$.

\begin{defn}~\label{def:Xsupp}
Let $H$ be a $d$--graph on a finite vertex set $V \subseteq \mathbb{Z}$. The polyhedral complex $X_H$ is the
subcomplex of the product
\[ \prod_{i=1}^d \Delta_{V} \] satisfying
\begin{itemize}
\item[(1)] The vertices of $X_H$ are $v_1 \times v_2 \times \dots v_d$, where $v_1v_2 \cdots v_d$ is an edge of $H$;

\item[(2)] For $\sigma_i \subseteq V$, the cells $\sigma_1 \times \sigma_2 \times \cdots \times \sigma_d$ satisfy $\sigma_1 < \sigma_2 < \cdots < \sigma_d$.
\end{itemize}
\end{defn}

An example of the construction is given in Figure \ref{fig:complex}.  From Definition \ref{def:Xsupp} one can see that the dimension of a cell $F = \sigma_1 \times \sigma \times \dots \times \sigma_d$ in $X_H$ is given by $n-d$, where $n = |\sigma_1 \cup \dots \cup \sigma_d|$.  Here $X_H$ is defined to be a subcomplex of a rather large ambient space; in Section \ref{sec:mixed} we will see a more convenient embedding.

\begin{figure}[ht]
\begin{center}
  \includegraphics[scale=0.5]{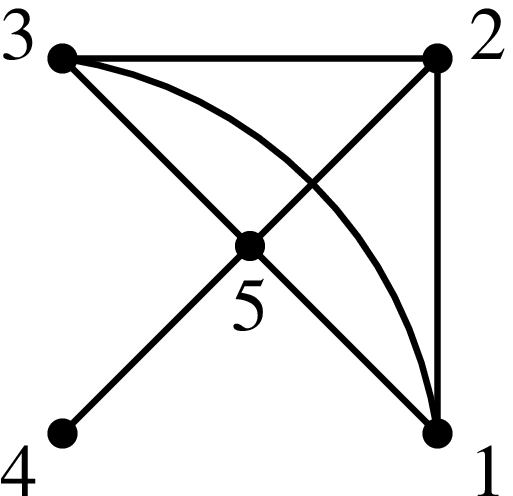} \quad \quad
   \includegraphics[scale=0.5]{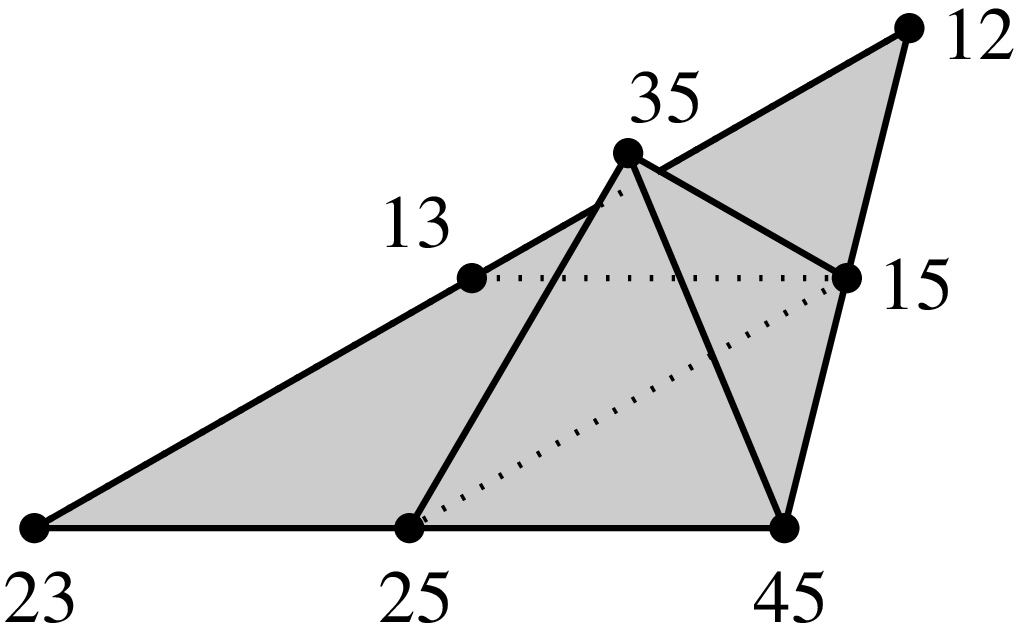}
  \caption{A graph $H$ along with the complex $X_H$.}\label{fig:complex}
\end{center}
\end{figure}

\begin{rem}\label{rem:labelcomplex}
For any $d$-graph $H$, the faces of the complex $X_H$ are naturally labeled by monomials.  In particular, the vertices are labeled by monomials corresponding to the edges of $H$ (i.e. the generators of $I_H$), and the higher dimensional faces $F = \sigma_1 \times \sigma_2 \times \cdots \times \sigma_d$ are labeled by
\[ \prod_{i=1}^d \prod_{v\in \sigma_i} x_{v}, \]
\noindent
which can be seen to equal the least common multiple of the monomial labels on the vertices of $F$.  As we remarked above, these monomials will sometimes be considered as vectors in ${\mathbb Z}^n$.
\end{rem}

\begin{rem}
Viewing $H$ as a directed $d$-graph (with orientation on the edges given by the integer labels on the vertices), one can regard $X_H$ as a `space of directed edges' of $H$.   If we let $E$ denote the $d$-graph with vertex set $[d]$ consisting of a single edge $\{1,2,\dots,d\}$, then $X_H = \textrm{Hom}(E,H)$, a space of directed graph homomorphisms from $E$ to $H$ analogous to the undirected $\textrm{Hom}$ complexes of \cite{BK}.  We will have more to say regarding this perspective in Section~\ref{sec:further}.
\end{rem}

In dealing with the topology of $X_H$ it will often be convenient to work with its face poset, where tools from poset topology can be applied to determine homotopy type, etc.  Since the order complex of the face poset of a polyhedral complex coincides with its barycentric subdivision, we are not losing any topological information.  We record this as a proposition.

\begin{prop}~\label{prop:Xsupp}
Let $H$ be a $d$-graph on a finite vertex set $V \subseteq \mathbb{Z}$. Define $P_H$ as the poset
of all maps
\[ \phi : \{1,2,\ldots, d\} \rightarrow 2^V \setminus \emptyset \]
such that
\begin{itemize}
\item[(1)] if $v_i\in \phi(i)$ for all $1\leq i \leq d$, then $v_1v_2\cdots v_d$ is an edge of $H$;
\item[(2)] if $1\leq i<j\leq d$, then $\phi(i) < \phi(j)$;
\end{itemize}
and $\phi \geq \nu$ if $\phi(i) \supseteq \nu(i)$ for all $i$. Then $P_H$ is the face poset of $X_H$ and the order complex
of $P_H$ is the barycentric subdivision of $X_H$.
\end{prop}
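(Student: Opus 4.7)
The plan is to identify each cell of $X_H$ with a map $\phi$ satisfying the stated conditions, and check that face containment corresponds to componentwise containment of the subsets. Once this bijection of posets is established, the statement that the order complex of $P_H$ is the barycentric subdivision of $X_H$ is a general fact about polyhedral complexes.

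First I would set up the bijection. The ambient polyhedral complex $\prod_{i=1}^d \Delta_V$ has cells precisely of the form $\sigma_1 \times \sigma_2 \times \cdots \times \sigma_d$ with each $\sigma_i \subseteq V$ nonempty, and the product of the face posets of the individual simplices (nonempty subsets under inclusion) is the face poset of the product. Thus to each cell $F = \sigma_1 \times \cdots \times \sigma_d$ of $X_H$ I associate the map $\phi_F \colon \{1,\ldots,d\} \to 2^V \setminus \emptyset$ defined by $\phi_F(i) = \sigma_i$, and conversely each $\phi \in P_H$ gives a product cell $\phi(1) \times \cdots \times \phi(d)$ of $\prod_{i=1}^d \Delta_V$.

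Next I would verify that the defining conditions match. Condition (2) of Definition~\ref{def:Xsupp} and condition (2) in the definition of $P_H$ are literally identical after identifying $\sigma_i$ with $\phi(i)$. For condition (1), the key observation is that a cell of a subcomplex has all of its vertices in that subcomplex: the vertices of $\sigma_1 \times \cdots \times \sigma_d$ are exactly the points $v_1 \times \cdots \times v_d$ with $v_i \in \sigma_i$, so requiring all of these to be vertices of $X_H$ is the same as requiring every such $d$-tuple to form an edge of $H$, which is condition (1) of $P_H$. Conversely, when the $\phi$-condition holds, condition (2) guarantees the $\sigma_i$ are pairwise disjoint (with the correct linear order), so each choice of $(v_1,\ldots,v_d)$ is genuinely a $d$-element set, and the resulting cell lies in $X_H$. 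This shows the bijection is well-defined in both directions.

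Then I would match the orders. Faces of the polytope $\sigma_1 \times \cdots \times \sigma_d$ inside $\prod \Delta_V$ are exactly the products $\tau_1 \times \cdots \times \tau_d$ with $\emptyset \neq \tau_i \subseteq \sigma_i$, so $F \supseteq F'$ in $X_H$ if and only if $\phi_F(i) \supseteq \phi_{F'}(i)$ for all $i$, which is precisely the order on $P_H$. Hence the bijection above is an isomorphism of posets, identifying $P_H$ with the face poset of $X_H$.

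Finally, for the second assertion I would invoke the standard fact that the order complex of the face poset of any polyhedral complex is its barycentric subdivision: one places a vertex at the barycenter of each cell and forms a simplex for each chain of faces, and inside each polytopal cell this recovers the usual barycentric subdivision of that polytope. Since no serious obstacle arises, the main point to handle carefully is the biconditional in condition (1)---ensuring that \emph{every} vertex of a product cell being an edge of $H$ is both necessary (because $X_H$ is a subcomplex) and sufficient (to guarantee the product cell actually sits in $X_H$).
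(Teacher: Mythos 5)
Your proposal is correct and fills in exactly the verification the paper dismisses with the single word ``Clear'': identifying cells $\sigma_1\times\cdots\times\sigma_d$ with maps $\phi$, matching the two sets of conditions (including the observation that $\sigma_1<\cdots<\sigma_d$ forces disjointness so each vertex tuple is a genuine edge), matching the face order with componentwise containment, and invoking the standard order-complex/barycentric-subdivision fact. There is no gap and no divergence from the paper's (implicit) argument.
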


\begin{proof}
Clear.
\end{proof}

\subsection{Recursive topology and a folding lemma}
We next turn to establishing certain properties of the complexes $X_H$.  Our ultimate goal is to show that $X_H$ supports a cellular resolution whenever $H$ is cointerval, but we collect the necessary topological results in this section.

\begin{prop}
Let $v \in V(H) \subseteq \mathbb{Z}$ be the smallest vertex of $H$, and let $G$ be the $v$--layer of $H$.  If $v$ is in every edge of $H$, then $X_H$ and $X_G$ are isomorphic.
\end{prop}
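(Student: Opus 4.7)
The plan is to exploit the face poset description of Proposition~\ref{prop:Xsupp} and construct an explicit bijection $\Phi: P_H \to P_G$ that respects the order relation; by Proposition~\ref{prop:Xsupp} this will identify the barycentric subdivisions, and since the bijection will preserve the product structure of cells, it will actually give an isomorphism $X_H \cong X_G$ of polyhedral complexes.

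The key observation driving the proof is that the hypothesis forces the first coordinate of every cell to be exactly $\{v\}$. Indeed, suppose $\phi \in P_H$ and pick any $u \in \phi(1)$. By condition (1) of Proposition~\ref{prop:Xsupp}, $u$ appears as the first coordinate of some edge $uu_2\cdots u_d$ of $H$ with $u < u_2 < \cdots < u_d$. But $v$ is the smallest vertex and lies in every edge of $H$, so $u = v$. Thus $\phi(1) \subseteq \{v\}$, and since $\phi(1) \neq \emptyset$ we have $\phi(1) = \{v\}$.

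With this in hand, I would define $\Phi: P_H \to P_G$ by $\Phi(\phi)(i) = \phi(i+1)$ for $i = 1,\ldots,d-1$, and define the inverse $\Psi: P_G \to P_H$ by $\Psi(\nu)(1) = \{v\}$ and $\Psi(\nu)(i) = \nu(i-1)$ for $i \geq 2$. One checks routinely that $\Phi(\phi)$ satisfies the two conditions required of an element of $P_G$: the ordering condition $\phi(2) < \cdots < \phi(d)$ is inherited directly from $P_H$, and the edge condition for $G$ is precisely the definition of the $v$--layer (given $v_i \in \phi(i+1)$ for $1 \leq i \leq d-1$, we know $v v_2 \cdots v_d \in E(H)$, so by the definition of the $v$--layer, $v_2\cdots v_d \in E(G)$). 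The reverse verification for $\Psi$ uses that $v$ is smaller than every other vertex to guarantee $\{v\} < \nu(1)$. Both $\Phi$ and $\Psi$ clearly preserve the order relation $\supseteq$-coordinatewise.

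I do not expect any serious obstacle here; the content of the statement is really just that the first factor contributes a degenerate $0$-dimensional factor $\{v\}$ to every cell, so it can be collapsed away. The only point requiring a moment of care is verifying that the bijection actually identifies faces with faces of the same dimension (which is immediate since adding a singleton factor to a product does not change dimension) and that the labeling-compatible product structure is preserved, so the isomorphism is one of polyhedral complexes and not merely of face posets.
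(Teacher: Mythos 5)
Your proof is correct and is essentially the same as the paper's: the paper simply states the cellular isomorphism $v\times\sigma_2\times\cdots\times\sigma_d\mapsto\sigma_2\times\cdots\times\sigma_d$, which is exactly your map $\Phi$ written at the level of cells rather than the face poset. Your added verification that $\phi(1)=\{v\}$ for every cell is the (implicit) justification the paper omits, so nothing differs in substance.
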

\begin{proof}
Use the cellular isomorphism $v\times \sigma_2 \times \sigma_3 \times \cdots \times \sigma_d \rightarrow \sigma_2 \times \sigma_3 \times \cdots \times \sigma_d$.
\end{proof}

Our next result show that a certain deformation of a graph $H$ (related to a neighborhood containment of vertices) induces a homotopy equivalence of the associated complexes.  Readers familiar with $\textrm{Hom}$ complexes will see the similarity to the `folds' of graphs which are important in that context (see for instance \cite{BKproof}).  If $G$ is the $l$--layer of $H$, then we denote the edges corresponding to $H$ in $G$ with $l \ast H$.

\begin{thm}\label{thm:fold}
Let $H$ be a $d$--graph with vertex set $V\subseteq \mathbb{Z}$.  Suppose $i<j$ are vertices of $H$, and let $G$ be the $j$--layer, and $G'$ the $i$--layer, of $H$. If $G$ is a subgraph of $G'$ then the complexes $X_H$ and $X_{H\setminus j\ast G}$ are homotopy equivalent.
\end{thm}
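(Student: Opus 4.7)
The plan is to work at the level of the face poset $P_H$ from Proposition~\ref{prop:Xsupp}, since its order complex is the barycentric subdivision of $X_H$ and homotopy equivalences of posets pass to the geometric realizations. Write $H' = H \setminus j \ast G$, so that $P_{H'}$ is naturally the subposet of $P_H$ consisting of those $\phi$ with $j \notin \phi(1)$ (if $j$ were in $\phi(1)$, some vertex of the cell would have $j$ as its smallest coordinate and correspond to an edge of $j \ast G$).

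I would introduce two one-step poset maps on $P_H$: an expansion $\psi_1$ that inserts $i$ whenever $j$ is already present in the first slot, and a contraction $\psi_2$ that strips $j$ away. Precisely, for $\phi \in P_H$ put
\[
\psi_1(\phi)(1) = \begin{cases} \phi(1) \cup \{i\} & \text{if } j \in \phi(1), \\ \phi(1) & \text{otherwise,} \end{cases} \qquad \psi_2(\phi)(1) = \phi(1) \setminus \{j\},
\]
leaving $\phi(k)$ unchanged for $k \geq 2$. The desired map is essentially $\psi_2 \circ \psi_1$, but splitting it into these two pieces makes each comparable to the identity in $P_H$.

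The core verification is that $\psi_1$ actually lands in $P_H$, and this is the only place the hypothesis $G \subseteq G'$ is used. Given a face $\phi$ with $j \in \phi(1)$ and any choice $v_k \in \phi(k)$ for $k \geq 2$, the tuple $(j,v_2,\dots,v_d)$ is an edge of $H$, so $v_2\cdots v_d$ lies in the $j$--layer $G$, hence in $G'$, hence $(i,v_2,\dots,v_d)$ is an edge of $H$; and the ordering condition $\psi_1(\phi)(1) < \phi(2)$ is immediate from $i < j < \phi(2)$. Since $\psi_1(\phi) \geq \phi$ for all $\phi$, the standard order-homotopy principle (a poset map $f$ with $f(x) \geq x$ is homotopic to the identity) shows the inclusion of $Q := \psi_1(P_H)$ into $P_H$ is a homotopy equivalence, with $\psi_1$ as a deformation retraction.

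For the second step, on $Q$ the map $\psi_2$ is well defined because membership $j \in \phi(1)$ for $\phi \in Q$ forces $i \in \phi(1)$, so $\phi(1) \setminus \{j\}$ remains nonempty; the image is contained in $P_{H'}$ since $j$ is removed from the first slot; and $\psi_2(\phi) \leq \phi$ always. Moreover $P_{H'}$ already sits inside $Q$ (a face of $X_{H'}$ has no $j$ in the first slot, so the $Q$-condition is vacuous), and $\psi_2$ fixes $P_{H'}$ pointwise. Thus $\psi_2$ is a deformation retraction of $Q$ onto $P_{H'}$. Composing gives $X_H \simeq X_Q \simeq X_{H'}$.

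The main obstacle is precisely the well-definedness of $\psi_1$; everything after that is routine poset topology. It is worth noting the parallel with the Hom-complex folding lemma mentioned in the paragraph before the theorem: the combinatorial hypothesis $G \subseteq G'$ plays the role of a neighborhood containment, and the proof exhibits the move as an elementary upward-then-downward collapse rather than a single map.
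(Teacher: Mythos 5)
Your proposal is correct and follows essentially the same route as the paper: the same two one-step maps (adjoin $i$ to $\phi(1)$ when $j\in\phi(1)$, then delete $j$), with the hypothesis $G\subseteq G'$ used exactly where you use it, to verify that the first map lands in $P_H$. The only difference is cosmetic: you justify the homotopy equivalences explicitly via the order-homotopy principle for maps comparable to the identity, whereas the paper phrases the same two maps as monotone surjections onto the intermediate subposet $P'_H$ and onto $P''_H\cong P_{H\setminus j\ast G}$ and leaves that standard justification implicit.
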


\begin{proof}
We construct a poset $P'_H$ along with two monotone surjective poset maps
\[ P_H \rightarrow P'_H \rightarrow P_{H\setminus j\ast G}, \]
which will show that the order complexes of $P_H$, $P'_H$, and $P_{H\setminus j\ast G}$ are all homotopy equivalent.
Let
\[ P_H' = \{ \phi \in P_H \mid  \textrm{if $j\in \phi(1)$ then $i\in \phi(1)$} \} \]
be a subposet of $P_H$. Define the map
\[ \xi : P_H \rightarrow P_H' \]
by $\xi(\phi)=\phi$ if $j \not\in \phi(1)$, and by
\[ \xi(\phi)(l) = \left\{
\begin{array}{cl}
\phi(1)\cup\{i\} & \textrm{if $l=1$} \\
\phi(l) & \textrm{if $l \neq 1$}
\end{array}
\right.
\]
if $j \in \phi(1)$.

We need to check that the map $\xi$ is well-defined. If $j \not\in \phi(1)$ then $\xi(\phi)=\phi\in P_H'$.  For $j \in \phi(1)$ the conditions in Proposition~\ref{prop:Xsupp} need to be checked. The addition of $i$ to $\phi(1)$ satisfies the second condition since $i<j$, and also satisfies the first condition since the $j$--layer is a subgraph of the $i$--layer. It's clear that $\xi$ is monotone and surjective.

Let
\[ P''_H =  \{ \phi \in P_H' \mid  j \not \in \phi(1) \} \]
be a subposet of $P_H'$ and note that $P''_H$ and $P_{H\setminus j\ast G}$ are isomorphic as posets. Define the map
\[ \xi' : P'_H \rightarrow P''_H \]
by $\xi'(\phi)=\phi$ if $j\not\in \phi(1)$, and by
\[
\xi'(\phi)(l) = \left\{
\begin{array}{cl}
\phi(1)\setminus \{j\} & \textrm{if $l=1$} \\
\phi(l) & \textrm{if $l \neq 1$}
\end{array}
\right.
 \]
if $j \in \phi(1)$. The only obstruction to the map $\xi'$ being well-defined is if $\xi'(\phi)(1)= \emptyset$ for some
$\phi$. But $\phi \in P_H'$, so every $\phi(1)$ that contains $j$ also contains $i$. The map $\xi'$ is clearly both surjective and
monotone.
\end{proof}

We point out that since the surjective map $P_H\rightarrow P_{H\setminus j\ast G}$ is a composition of monotone maps, the induced map on the order complex of the underlying posets (which is the barycentric subdivision of the complexes $X_H$ and $X_{H\setminus j\ast G}$) is a collapsing and in particular a simple homotopy equivalence.

\section{Cointerval graphs and their cellular resolutions} \label{sec:resolution}
In this section we establish our main result, namely that the complexes $X_H$ support minimal cellular resolutions whenever $H$ is a cointerval graph. We discuss some consequences regarding combinatorial interpretations of the Betti numbers of cointerval graphs.

\subsection{Cointerval graphs}
We begin with the definition of cointerval graphs.

\begin{defn}\label{def:intHyp}
The class of \emph{cointerval} $d$--graphs is defined recursively as follows.

Any $1$--graph is cointerval. For $d>1$, the finite
$d$--graph $H$ with vertex set $V(H)\subseteq \mathbb{Z}$ is \emph{cointerval}
if
\begin{itemize}
\item[(1)] for every $i \in V(H)$ the $i$--layer of $H$ is cointerval;
\item[(2)] for every pair $i<j$ of vertices, the $j$--layer of $H$ is a subgraph of the $i$--layer of $H$.
\end{itemize}
\end{defn}

When $d = 2$ the class of cointerval graphs defined here can be seen to coincide with the well-studied \emph{complements of interval graphs} of structural graph theory (hence the name).  By definition, an \emph{interval graph} is a 2-graph with vertices given by intervals $I$ in the real line, and with adjacency $I \sim I^\prime$ if and only if $I \cap I^\prime \neq \emptyset$.  The \emph{complement} of a 2-graph $H$ is a 2-graph $H^C$ with the same vertex set as $H$ with adjacency $v \sim v^\prime$ in $H^C$ if and only if $v$ and $v^\prime$ do \emph{not} form an edge in $H$ (note that $v \neq v^\prime$, all graphs considered here do not have loops).  Given a cointerval 2-graph as in Definition \ref{def:intHyp}, one can obtain an interval representation as follows.  Without loss of generality, assume that $V(H) = [n]$.  To each vertex $i \in [n]$ assign the interval $[\ell_i+1,i]$, where $\ell_i$ is the largest neighbor of $i$ in $H$ such that $\ell_i < i$; assign $[1,i]$ to the vertex if there is no such $\ell_i$ (in particular assign $[1,1]$ to the vertex $1$).

Conversely, suppose $H$ is represented as the complement of an interval graph (so the vertices are given by intervals in the real line, with disjoint interval determining adjacency).  Order the intervals according to the rightmost endpoint, so that $[a,b] < [a^\prime, b^\prime]$ if $b < b^\prime$.  One can check that this determines a cointerval graph as in Definition \ref{def:intHyp}.

Cointerval $d$-graphs include the class of `strongly stable' hypergraphs, considered for instance in an algebraic context in \cite{NR}.  By definition a \emph{strongly stable} $d$--hypergraph $H$ on a vertex set $[n]$ has the property that whenever $E$ is an edge in $H$ with $i \in E$, then $E\backslash\{i\} \cup \{i-1\}$ is also an edge (whenever that set has the proper size).  These are also called `shifted' hypergraphs, or `square-free order ideals' in the Gale order on $d$-subsets.  When $d =2$, strongly stable 2-graphs correspond to the well-known class of `threshold' graphs.  Note that if in Definition \ref{def:intHyp} we required that 1-graphs had the property that whenever $i \in E(H)$ then $j \in E(H)$ for all $j < i$ we would recover the class of strongly stable hypergraphs.  An example of a threshold (and hence cointerval) 2-graph is depicted in Figure \ref{fig:ex1}.

\begin{figure}[ht]
\begin{center}
  \includegraphics[scale=0.5]{mp7.eps} \quad \quad
   \includegraphics[scale=0.5]{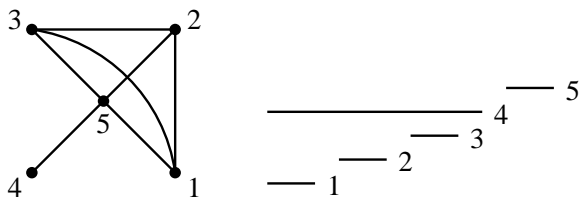}
  \caption{A threshold graph and its interval representation.}\label{fig:ex1}
\end{center}
\end{figure}

In light of Proposition \ref{prop:cellres}, the construction of our cellular resolutions will rely on the fact that our class of graphs is closed under taking induced subgraphs.  Our next results shows that this is indeed the case for cointerval graphs.

\begin{prop} \label{prop:indInt}
Any induced subgraph of a cointerval $d$--graph is a cointerval $d$--graph.
\end{prop}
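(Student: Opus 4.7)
The plan is to prove this by induction on $d$. The base case $d=1$ is immediate, since every $1$-graph is declared cointerval by Definition~\ref{def:intHyp}. For the inductive step, fix a cointerval $d$-graph $H$ with $d > 1$ and a subset $W \subseteq V(H)$; I will verify conditions (1) and (2) of the definition for $H[W]$.

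The key lemma underlying both verifications is that the layer operation commutes with passing to induced subgraphs: for any $v \in W$, the $v$-layer of $H[W]$ equals the induced subgraph of the $v$-layer of $H$ on the vertex set $W \setminus \{v\}$. This follows by unpacking the definitions, since both graphs have the same edges, namely those tuples $v_1 \cdots v_{d-1}$ satisfying $v < v_1, \ldots, v_{d-1}$, all $v_k \in W$, and $v v_1 \cdots v_{d-1} \in E(H)$.

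With this in hand, condition (1) for $H[W]$ follows directly: the $v$-layer of $H[W]$ is an induced subgraph of the $v$-layer of $H$; the latter is a cointerval $(d-1)$-graph by hypothesis on $H$; and the inductive hypothesis in dimension $d-1$ then yields that this induced subgraph is itself cointerval. For condition (2), pick any pair $i < j$ in $W$ and any edge $v_1 \cdots v_{d-1}$ of the $j$-layer of $H[W]$. Viewed inside $H$, this is an edge of the $j$-layer of $H$, so by cointervalness of $H$ it is also an edge of the $i$-layer of $H$. Since every $v_k$ lies in $W$ and satisfies $v_k > j > i$, the tuple $v_1 \cdots v_{d-1}$ is an edge of the $i$-layer of $H$ with all entries in $W \setminus \{i\}$, hence it is an edge of the $i$-layer of $H[W]$.

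I do not expect any genuine obstacle here: once the compatibility between layers and induced subgraphs is recorded, both required conditions reduce to applying the inductive hypothesis and directly translating the corresponding properties of $H$ to $H[W]$.
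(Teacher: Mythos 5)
Your proof is correct and follows essentially the same route as the paper's: induction on $d$, the observation that the $v$-layer of $H[W]$ is the induced subgraph of the $v$-layer of $H$ on $W\setminus\{v\}$, and then conditions (1) and (2) via the inductive hypothesis and a direct edge check. You spell out the layer-compatibility lemma and the verification of condition (2) in slightly more detail than the paper does, but the argument is the same.
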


\begin{proof}
We prove that if the $d$--graph $H$ is cointerval, then $H[V]$ is cointerval.

The proof is by induction on $d$. Any $1$--graph is cointerval, and hence assume $d>1$. We need to check the conditions in Definition~\ref{def:intHyp}.
\begin{itemize}
\item[(1)] Let $G$ be the $i$--layer of $H[V]$, and let $G'$ be the $i$--layer of $H$. Then by definition, $G=G'[V]$. The $(d-1)$--graph $G'$ is cointerval since it is a layer of $H$. By induction every induced subgraph
of $G'$ is cointerval.
\item[(2)]
 If $i<j$ are vertices of $H[V]$, then the $j$--layer of $H[V]$ is a subgraph of the $i$--layer of $H[V]$, since
the $j$--layer of $H$ is a subgraph of the $i$--layer of $H$.
\end{itemize}
\end{proof}

\subsection{Minimal cellular resolution of cointerval graphs}
In this section we establish our main result, Theorem \ref{thm:resCoInt}.  For the proof we will need the following observation.

\begin{lemma}\label{lem:coIntCont}
If $H$ is a non-empty cointerval $d$--graph with $d>0$, then $X_H$ is contractible.
\end{lemma}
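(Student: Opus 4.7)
The plan is to prove the lemma by induction on $d$. For the base case $d=1$, a non-empty cointerval $1$--graph $H$ has $X_H$ equal to the full simplex on the vertex set $E(H)$: the ordering condition in Definition \ref{def:Xsupp}(2) is vacuous, and the only requirement on a cell is that each of its constituent vertices be an edge of $H$, so every subset of $E(H)$ forms a face. Such a simplex is contractible.

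For $d > 1$, let $v_1 < v_2 < \cdots < v_n$ be the vertices of $H$. The strategy is to iteratively apply the folding lemma (Theorem \ref{thm:fold}) to reduce $H$ to a graph $H^*$ in which $v_1$ lies in every edge, and then invoke the proposition preceding Theorem \ref{thm:fold} to peel off $v_1$ and drop the ambient dimension. Starting from $H^{(0)} = H$, I would define $H^{(k)}$ for $k = 2, 3, \ldots, n$ by applying the folding lemma with $i = v_1$ and $j = v_k$ to $H^{(k-1)}$; the effect is to delete exactly those edges of $H^{(k-1)}$ whose minimum element is $v_k$, and the folding lemma yields $X_{H^{(k-1)}} \simeq X_{H^{(k)}}$. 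The critical verification is that the hypothesis of the folding lemma (namely, containment of the $v_k$--layer in the $v_1$--layer) holds at each step. This works out because the previous folds have removed only edges whose minimum lies in $\{v_2, \ldots, v_{k-1}\}$, so neither the $v_1$--layer nor the $v_k$--layer of $H^{(k-1)}$ has changed from that of the original $H$; and cointervalness of $H$ (Definition \ref{def:intHyp}(2)) supplies the required containment.

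After all $n-1$ folds, every edge of $H^{(n-1)}$ has $v_1$ as its minimum, so the proposition preceding Theorem \ref{thm:fold} gives a cellular isomorphism $X_{H^{(n-1)}} \cong X_{G_1}$, where $G_1$ is the $v_1$--layer of $H$. The graph $G_1$ is cointerval by Definition \ref{def:intHyp}(1), and it is non-empty: picking any edge of $H$, say one with minimum $v_k$, its $v_k$--layer is non-empty, and by Definition \ref{def:intHyp}(2) is contained in $G_1$. Applying the inductive hypothesis to the non-empty cointerval $(d-1)$--graph $G_1$ then yields contractibility of $X_{G_1}$, so $X_H \simeq X_{G_1}$ is contractible as well. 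The main subtlety is the bookkeeping needed to see that the relevant layers remain intact across the sequence of folds, but this is automatic since fold $k$ deletes only edges whose minimum is $v_k$.
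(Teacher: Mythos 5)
Your proof is correct and follows essentially the same route as the paper's: repeated application of the folding lemma (Theorem \ref{thm:fold}) to empty out all layers but one, then the layer-isomorphism proposition and induction on $d$. The only organizational difference is that the paper runs a secondary induction on the number of non-empty layers, folding the maximal non-empty layer into an arbitrary smaller non-empty one, whereas you fold every layer directly into the $v_1$--layer in one explicit sweep (with correct bookkeeping that the relevant layers are undisturbed); both work.
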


\begin{proof}
We suppose $H$ is a $d$--graph and $t$ is the number of non-empty layers of $H$.  The proof is by induction over $d$ and $t$.

If $d=1$ then $X_H$ is a simplex and contractible. Now assume that $d>1$.

If $t=1$ and the $i$--layer $G$ is the only non-empty one, then $X_H$ and $X_G$ are isomorphic, and $X_H$ is contractible
by induction on $d$. Now assume that $t>1$.

Let $j$ be the maximal number such that the $j$--layer $G$ of $H$ is non-empty, and let $i<j$ be such that the $i$--layer of
$H$ is non-empty. The $d$--graph $H$ is cointerval, and hence by definition the $j$-layer is a subgraph of the $i$--layer. By Theorem~\ref{thm:fold}
the space $X_H$ is homotopy equivalent to $X_{H \setminus j \ast G}$. The $d$--graph $H \setminus j \ast G$ is also
cointerval, but with $t-1$ non-empty layers. By induction  $X_{H \setminus j \ast G}$ is contractible, and hence so is
$X_H$.
\end{proof}

With these tools in place we can state and prove our main result.  For this recall from Remark \ref{rem:labelcomplex} that $X_H$ is a labeled polyhedral complex with vertex labels corresponding to the monomial generators of the ideal $I_H$.

\begin{thm}\label{thm:resCoInt}
Let $H$ be a cointerval $d$--graph. Then the polyhedral complex $X_H$ supports a minimal cellular resolution of the edge ideal $I_H$.

In particular, for $i\geq 0$ and $\alpha \subseteq V(H)$ the graded Betti numbers are given by $\beta_{i,\alpha}(I_H)=0$ if $i \neq |\alpha|-d-1$, and
\[ \beta_{|\alpha|-d-1,\alpha}(I_H)=| \{ \sigma \in X_{H[\alpha]} \mid \dim \sigma = |\alpha|-d-1 \}|. \]
\noindent
In other words, the $(i,\alpha)$--Betti numbers are given by the number of faces of dimension $i$ in $X_H$ with monomial label $\alpha$.
\end{thm}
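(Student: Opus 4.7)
The plan is to apply the criteria in Proposition~\ref{prop:cellres}: first verify the acyclicity condition on every principal subcomplex $X_{H,\leq\alpha}$, then check the strict-inclusion condition for minimality, and finally extract the finely graded Betti numbers via Proposition~\ref{prop:cellbetti}.

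For the acyclicity, observe that every cell $\sigma_1\times\cdots\times\sigma_d$ of $X_H$ carries a \emph{squarefree} monomial label, since the blocks $\sigma_i$ are pairwise disjoint by the ordering condition $\sigma_1<\cdots<\sigma_d$. Consequently the condition ``label divides $\alpha$'' depends only on $W := V(H)\cap\mathrm{supp}(\alpha)$, and reading off Definition~\ref{def:Xsupp} directly yields the identification
\[ X_{H,\leq\alpha} = X_{H[W]}. \]
By Proposition~\ref{prop:indInt} the induced subgraph $H[W]$ is again cointerval, so Lemma~\ref{lem:coIntCont} applies: $X_{H[W]}$ is either empty (if $H[W]$ has no edges, in which case there is nothing to verify) or contractible, hence acyclic over $k$. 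This is precisely the hypothesis required by Proposition~\ref{prop:cellres}.

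Minimality is a direct combinatorial check. For any strict face relation $F=\sigma_1\times\cdots\times\sigma_d \subsetneq G=\tau_1\times\cdots\times\tau_d$ in $X_H$, some $\sigma_i \subsetneq \tau_i$, and any vertex in $\tau_i\setminus\sigma_i$ lies outside $\bigcup_j \sigma_j$ by pairwise disjointness of the blocks $\tau_j$. The label of $G$ therefore contains a variable absent from the label of $F$, so the two labels differ, and the last clause of Proposition~\ref{prop:cellres} gives minimality.

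Granted a minimal cellular resolution, the Betti numbers can be read off directly from the face structure of $X_H$: each face with label exactly $\alpha$ contributes one basis element of $S[-\alpha]$ to the homological module of the resolution indexed by its cell dimension. Any such face must satisfy $\bigcup\sigma_i=\mathrm{supp}(\alpha)$, so it lies in the subcomplex $X_{H[\alpha]}$, and its dimension $\sum(|\sigma_i|-1)$ is determined entirely by $|\alpha|=\sum|\sigma_i|$. This yields both the vanishing statement (in all but the prescribed homological degree) and the counting formula. The real substance of the argument lies not in these verifications but in the contractibility result, Lemma~\ref{lem:coIntCont}, which in turn leverages the folding theorem~\ref{thm:fold} to inductively collapse a cointerval $d$-graph down to a simplex; once contractibility is in hand, the theorem is a clean application of the Miller--Sturmfels criteria together with Proposition~\ref{prop:indInt}.
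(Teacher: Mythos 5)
Your proposal is correct and follows essentially the same route as the paper's proof: acyclicity of $(X_H)_{\leq\alpha}$ via the identification with $X_{H[W]}$ together with Proposition~\ref{prop:indInt} and Lemma~\ref{lem:coIntCont}, minimality from the strict-inclusion label check, and the Betti number formula read off from the minimal resolution. The only (harmless) divergence is cosmetic: you certify that labels differ under strict inclusion by exhibiting a new variable, while the paper notes that the face dimension is determined by the degree of the label; and you are slightly more careful than the paper in noting the empty case of $X_{H[W]}$.
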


\begin{proof}
We will apply the conditions from Proposition \ref{prop:cellres}.  In particular let $n = |V(H)|$, and for any $\alpha \in {\mathbb Z}^n$ we consider the complex $(X_H)_{\leq \alpha}$.  All labels are square-free, so it is enough to restrict to $\alpha \in \{0,1\}^n$.  For any such $\alpha$, the complex $(X_H)_{\leq \alpha}$ is given by the
complex $X_{H[V]}$ where $V=\{ v \in V(H) \mid \alpha_v=1 \}$.  We are assuming that $H$ is cointerval, and hence by Proposition~\ref{prop:indInt} so is $H[V]$.  By Lemma~\ref{lem:coIntCont} the complex $X_{H[V]}$ is contractible, and hence by Proposition \ref{prop:cellres} the complex $X_H$ supports a cellular resolution of $I_H$.

We note that if $\sigma \subsetneq \tau$ is a strict containment of faces then the monomial labels on those faces differ since in particular the dimensions of the faces can be read off by the monomial label.  Once again, from Proposition \ref{prop:cellres} we conclude that the resolution is minimal.
\end{proof}

\begin{example}
In Figure~\ref{fig:ex1} we see a cointerval 2-graph along with its interval representation.  The minimal cellular resolution $X_H$ is depicted in Figure \ref{fig:complex}.  This graph is also the complement of a threshold graph and also appears in \cite{CN1}.
\end{example}

In independent work, Nagel and Reiner \cite{NR} construct cellular resolutions of the edge ideals of strongly stable hypergraphs (among other non-square free classes).  As we mentioned above, the cointerval $d$-graphs form a strictly larger class than strongly stable graphs, and our construction of $X_H$ specializes to the `complex of boxes' developed in \cite{NR}.  For the case $d=2$ it is known that strongly stable 2-graphs correspond to \emph{threshold} graphs.  The complement of a threshold graph is threshold, and threshold graphs are interval graphs, and hence our results are more general already in the case $d = 2$.  In particular, there exist interval graphs which are not threshold, as the next example illustrates.  For further examples of 3-graphs which are cointerval but not strongly stable we refer the reader to the Section \ref{sec:casestudy}.

\begin{figure}[ht]
\begin{center}
 \includegraphics[scale=0.5]{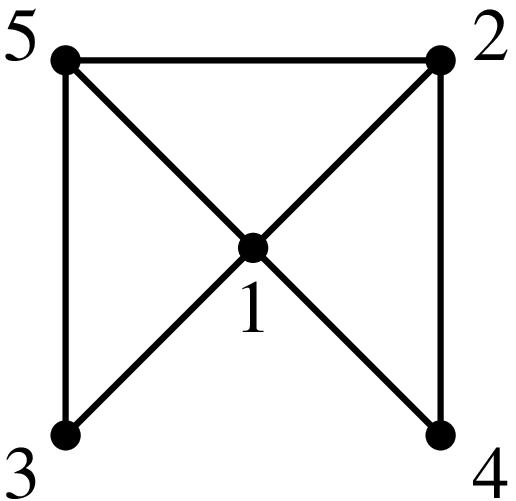} \quad\quad
 \includegraphics[scale=0.5]{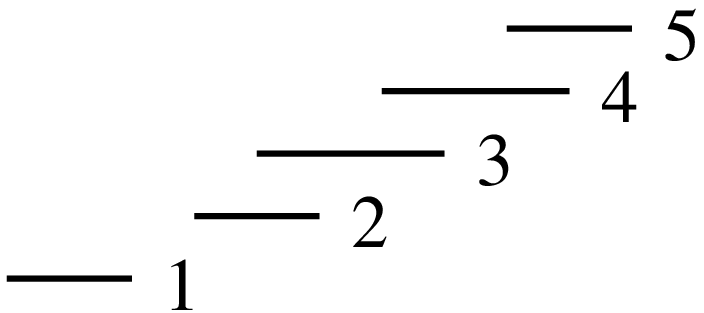}
\caption{A cointerval graph which is not threshold, with its interval representation.}\label{fig:ex2}
\end{center}
\end{figure}

\begin{figure}[ht]
\begin{center}
 \includegraphics[scale=0.5]{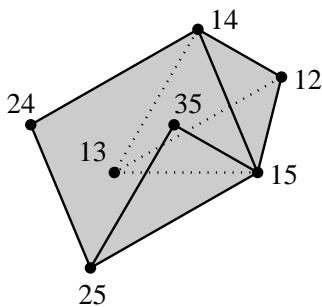}
\caption{The cellular resolution of the graph in Figure~\ref{fig:ex2}.}\label{fig:ex2Res}
\end{center}
\end{figure}

\begin{example}
An example of a graph which is cointerval but not threshold is depicted in
Figure~\ref{fig:ex2}, along with its interval representation (threshold graphs have the property that all induced subgraphs have either a dominating or isolated vertex; here the subgraph induced on $\{2,3,4,5\}$ does not have that property).

Its cellular resolution is depicted in Figure~\ref{fig:ex2Res}, with seven 0-cells, eleven 1-cells, six 2-cells, and a single 3-cell. There are perhaps better ways to illustrate the complex, but we want to emphasize that it is a subcomplex of the subdivision depicted in Figure~\ref{fig:x25} (we will see this in the next section).

For concreteness, we explicitly write down the resolution:

\[ \begin{array}{ccccccccccccc}

0 &  \rightarrow & S &  \rightarrow & S^6 &  \rightarrow & S^{11} &
\rightarrow  & S^7 &  \rightarrow &  I  &  \rightarrow  & 0 \\
&& 11111 && 11110 && 11100 && 11000 \\
&&             &&  11101 && 11010 && 10100 \\
&&             &&  11101 && 11010 && 10010 \\
&&             &&  11011 && 11001 && 10001 \\
&&             &&  11011 && 11001 && 01010 \\
&&             &&  10111 && 10110 && 01001 \\
&&             &&               && 10101 && 00101 \\
&&             &&               && 10101 \\
&&             &&               && 10011 \\
&&             &&               && 01101 \\
&&             &&               && 01011 \\
\end{array} \]

\end{example}

\begin{cor}
If $H$ is a cointerval $d$--graph, then the edge ideal $I_H$ has a $d$--linear resolution.
\end{cor}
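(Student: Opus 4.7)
The plan is to read the corollary directly off Theorem~\ref{thm:resCoInt} together with a single dimension count built into the construction of $X_H$. Recall from the remark after Definition~\ref{def:Xsupp} that a face $F = \sigma_1 \times \cdots \times \sigma_d$ of $X_H$ has dimension $\dim F = |\sigma_1 \cup \cdots \cup \sigma_d| - d$, and by Remark~\ref{rem:labelcomplex} its monomial label is the squarefree monomial whose support is exactly $\sigma_1 \cup \cdots \cup \sigma_d$. Thus the identity $\dim F + d = |\alpha(F)|$ holds for \emph{every} face of $X_H$, independent of which cointerval graph $H$ we are considering.

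Since Theorem~\ref{thm:resCoInt} provides a \emph{minimal} cellular resolution of $I_H$, the finely graded Betti number $\beta_{i,\alpha}(I_H)$ equals the number of $i$-dimensional faces of $X_H$ with label $\alpha$. Combined with the dimension-label identity above, any nonzero $\beta_{i,\alpha}(I_H)$ forces $|\alpha| = i + d$, so the coarsely graded Betti numbers $\beta_{i,j}(I_H) = \sum_{|\alpha|=j} \beta_{i,\alpha}(I_H)$ are concentrated on a single diagonal of the Betti table. This is precisely the $d$-linearity condition.

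No substantial obstacle is encountered here: once Theorem~\ref{thm:resCoInt} is in hand, the corollary is a pure bookkeeping consequence of the fact that cell dimension and label size in $X_H$ are synchronized across the entire complex. The only thing worth emphasizing is that minimality of the resolution is essential, since it is what converts the cellular Betti count of Proposition~\ref{prop:cellbetti} into an exact equality between $\beta_{i,\alpha}(I_H)$ and the number of $i$-faces of $X_H$ carrying the label $\alpha$.
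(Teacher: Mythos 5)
Your argument is correct and is exactly the (unwritten) argument the paper intends: the corollary is stated as an immediate consequence of Theorem~\ref{thm:resCoInt}, and you have simply made explicit the bookkeeping that every face of $X_H$ satisfies $\dim F = |\alpha(F)| - d$, so a minimal cellular resolution supported on $X_H$ has all nonzero Betti numbers on a single diagonal. The only quibble is that minimality is not strictly \emph{essential} for linearity (a non-minimal cellular resolution with the same dimension--degree synchronization would still bound the Betti numbers off the diagonal by zero), but this does not affect the validity of your proof.
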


\begin{cor}
Let $H$ be a cointerval $d$--graph, and let $I^{\ast}_H$ denote the Alexander dual of $I_H$.  Then the ring $S/I_H^{\ast}$ is Cohen-Macaulay.
\end{cor}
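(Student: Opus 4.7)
The plan is to deduce this corollary as an immediate consequence of the previous one by invoking the Eagon--Reiner theorem. Recall that for any squarefree monomial ideal $I = I_\Delta$ arising as the Stanley--Reisner ideal of a simplicial complex $\Delta$, the Eagon--Reiner theorem asserts that $I_\Delta$ has a linear resolution if and only if $S/I_{\Delta^\vee}$ is Cohen--Macaulay, where $\Delta^\vee$ denotes the Alexander dual simplicial complex (equivalently, $I_{\Delta^\vee} = I_\Delta^{\ast}$ under the conventions of \cite{MS}).

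First I would recall from the setup in Section~\ref{sec:Def} that $I_H = I_{\mathrm{Ind}(H)}$, so $I_H$ is a squarefree monomial ideal generated in the single degree $d$. Next I would invoke the previous corollary, which tells us that $I_H$ has a $d$-linear resolution, i.e.\ $\beta_{i,j}(I_H) = 0$ whenever $j - i \neq d-1$. Since $I_H$ is generated in degree $d$, this $d$-linearity coincides exactly with having a linear resolution in the sense required by the Eagon--Reiner theorem (the first linear strand begins in degree $d$ precisely because the generators sit there).

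Finally, applying Eagon--Reiner to $\Delta = \mathrm{Ind}(H)$ we conclude that $S/I_{\mathrm{Ind}(H)^\vee} = S/I_H^{\ast}$ is Cohen--Macaulay, which is the claim.

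There is essentially no obstacle here; the only thing to verify is that our notion of ``$d$-linear resolution'' from Section~\ref{sec:Def} matches the hypothesis of Eagon--Reiner, which it does because the ideal is generated purely in degree $d$. An alternative, more self-contained route would be to unpack Cohen--Macaulayness of $S/I_H^{\ast}$ in terms of the reduced homology of links of $\mathrm{Ind}(H)^\vee$ and use Theorem~\ref{thm:resCoInt} together with Hochster's formula to check vanishing, but invoking Eagon--Reiner gives a one-line deduction and is the cleanest approach.
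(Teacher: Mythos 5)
Your proposal is correct and matches the paper's proof exactly: the paper also deduces the corollary by combining the preceding corollary (that $I_H$ has a $d$-linear resolution) with the Eagon--Reiner theorem from \cite{ER}. You have simply spelled out the details that the paper's one-line proof leaves implicit.
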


\begin{proof}
This follows from a result of Eagon and Reiner from \cite{ER}.
\end{proof}

The Alexander dual of an edge ideal of a graph $H$ is often called the \emph{vertex cover algebra} of $H$.  The fact that the Alexander duals of cointerval hypergraphs are Cohen-Macaulay has potential applications to face counting of simplicial complexes in the context of algebraic shifting.  Algebraic shifting is a process which associates to an arbitrary simplicial complex $\Delta$ a \emph{shifted} (strongly stable) complex $\Delta^\prime$, preserving much of the combinatorial data of $\Delta$ (see \cite{K}).  Shifted complexes are known to be vertex-decomposable (see \cite{BjKa}) and hence Cohen-Macaulay, and via Stanley-Reisner theory one can conclude certain things about its f-vector (e.g. non-negativity of the h-vector).  The Alexander dual of the independence complex of a hypergraph is a complex whose facets are given by the complements of edges, and we have seen that if $H$ is a cointerval graph this complex is already Cohen-Macaulay.  Hence in general one will not need to `shift as far' to obtain such a complex.

Furthermore, one can ask the question: When are the Alexander duals of cointerval hypergraphs shellable or vertex decomposable?

As was pointed out in \cite{NR}, the cellular complex $X_H$ leads to an easy combinatorial interpretation of the Betti numbers defined in Theorem~\ref{thm:resCoInt}.

\begin{cor}
Let $H$ be a cointerval $d$--graph with vertex set $\{1,2,\ldots, n\}$. The \emph{hyper Ferrers diagram} $F_H$
of $H$ is defined to be
\[ \{ (v_1,v_2, \ldots, v_d) \in \mathbb{N}^d \mid v_1<v_2<\ldots <v_d \textrm{ and }v_1v_2\cdots v_d \in E(H) \}.\]
A \emph{cube} in $\mathbb{N}^d$ is a subset of the type $C=S_1 \times S_2 \times \cdots \times S_d$ where all $S_i \subset \mathbb{N}$.
The \emph{coordinates} of $C$ is the subset of ${\mathbb Z}$ given by $S_1 \cup S_2 \cup \cdots \cup S_d$.

If $V$ is a subset of $\{1,2,\ldots, n\}$ then $\beta_{|V|-d-1,V}(I_H)$ equals the number of cubes in $F_H$ with $V$ as coordinates.
\end{cor}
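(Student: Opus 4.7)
The plan is to deduce the corollary directly from Theorem \ref{thm:resCoInt} by exhibiting a tautological bijection between the faces of $X_{H[V]}$ with monomial label $V$ and the cubes in $F_H$ with coordinate set $V$.

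First, I would unfold the two sides. By Theorem \ref{thm:resCoInt}, the Betti number $\beta_{|V|-d-1,V}(I_H)$ counts the faces $F = \sigma_1 \times \sigma_2 \times \cdots \times \sigma_d$ of $X_{H[V]}$ of the appropriate dimension whose monomial label is $V$. By Definition \ref{def:Xsupp} and Remark \ref{rem:labelcomplex}, such an $F$ is characterized by three conditions: (i) $\sigma_1 < \sigma_2 < \cdots < \sigma_d$ as subsets of $V$; (ii) for every transversal $(v_1,\ldots,v_d) \in \sigma_1\times\cdots\times\sigma_d$, the set $v_1v_2\cdots v_d$ is an edge of $H$; and (iii) $\sigma_1 \cup \sigma_2 \cup \cdots \cup \sigma_d = V$ (which forces the $\sigma_i$ to be pairwise disjoint). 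On the other side, a cube $C = S_1\times\cdots\times S_d \subseteq F_H$ with coordinates $V$ is a product of subsets $S_i \subseteq \mathbb{N}$ with $S_1 \cup \cdots \cup S_d = V$ such that every element of $C$ is a strictly increasing sequence forming an edge of $H$.

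Next, I would define the bijection in both directions. Send a face $F$ as above to the cube $C_F := \sigma_1 \times \cdots \times \sigma_d$. Condition (ii) says exactly $C_F \subseteq F_H$, and the coordinate set of $C_F$ equals $V$ by condition (iii). Conversely, given any cube $C = S_1\times\cdots\times S_d \subseteq F_H$ with coordinates $V$, pick arbitrary $v_i \in S_i$; since $(v_1,\ldots,v_d) \in F_H$ forces $v_1 < v_2 < \cdots < v_d$, we conclude $S_1 < S_2 < \cdots < S_d$, so the $S_i$ are disjoint and conditions (i)--(iii) hold for $\sigma_i := S_i$. Thus the assignments are mutually inverse.

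There is essentially no obstacle here; the only point requiring a moment of care is the automatic ordering $S_1 < S_2 < \cdots < S_d$ on the cube side, which is forced by the fact that $F_H$ lies in the ordered region of $\mathbb{N}^d$. Once this is observed, the correspondence is the identity on underlying set products, and combining it with Theorem \ref{thm:resCoInt} yields the stated formula.
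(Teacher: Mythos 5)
Your proof is correct and is essentially the paper's own argument: the paper's proof is the one-line ``Use the definition of $X_H$ and Theorem~\ref{thm:resCoInt},'' and you have simply written out the tautological bijection (a face $\sigma_1\times\cdots\times\sigma_d$ of $X_{H[V]}$ with label $V$ \emph{is} a cube of $F_H$ with coordinates $V$, the ordering $S_1<\cdots<S_d$ being forced on the cube side) that this one-liner leaves implicit. The only caveat, which you inherit from the paper rather than introduce, is the homological index: such faces have dimension $|V|-d$, so the subscript $|V|-d-1$ in the statement (and in Theorem~\ref{thm:resCoInt}) appears to be off by one relative to the convention that $\beta_{i,\alpha}(I_H)$ counts $i$-faces with label $\alpha$.
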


\begin{proof}
Use the definition of $X_H$ and Theorem~\ref{thm:resCoInt}.
\end{proof}

\begin{rem}\label{rem:otherideals}
In \cite{NR} Nagel and Reiner consider relabelings of their `complex of boxes' to recover minimal cellular resolutions of other classes of monomial ideals.  For them, the class of strongly stable hypergraph edge ideals is denoted $I(K)$, and they consider what they call `depolarizations' to obtain resolutions of subideals of the power of the maximum ideal $\langle x_1, \dots, x_{n-d+1} \rangle ^d$, a class they call $I(M)$.

Furthermore, to an ideal in either of these classes they associate an edge ideal of a $d$-partite graphs, obtaining the classes $I(F(K))$ and $I(F(M))$.  In \cite{NR} it is shown that the same polyhedral complex (with appropriate relabelings) supports minimal resolutions of each of these classes of monomial ideals.  We point that the same constructions can be utilized in our case, with the more general class of cointerval $d$-graphs serving as the `base case'.  We do no work out the details here, although we do say something about the analogue of $I(M)$ in Section \ref{subsec:mixed}.
\end{rem}

\section{Mixed subdivisions and a nice embedding} \label{sec:mixed}

One particularly nice feature of the complexes $X_H$ is that we can give explicit geometric embeddings, without resorting to the high-dimensional ambient space involved in Definition \ref{def:Xsupp}.  It turns out that for the case of complete graphs $K_n^d$ the complex $X_{K_n^d}$ can be realized as a particular \emph{mixed subdivision} of a dilated simplex (definitions below).  As any graph is a subgraph of some complete graph, the general complexes $X_H$ are then subcomplexes of these subdivisions.  This leads to useful geometric representations of our resolutions, and in fact it was these embeddings that led us to the construction of $X_H$ described in the previous section.

\subsection{Mixed subdivisions and the staircase triangulation}

We begin with a brief review of some basic notions of polyhedral geometry (see for example \cite{Z}).  In this section we let $e_1, \dots, e_{k+1}$ denote the standard basis vectors in ${\mathbb R}^{k+1}$ and let $\Delta_k = {\textrm{conv}} \{e_1,\dots,e_{k+1}\}$ denote the standard $k$--simplex.  We fix $d \leq n$ and let $m = n - d$.  We wish to realize ${X}_{K_n^d}$ as a certain mixed subdivision of $d \Delta_m$, the $d$--fold Minkowski sum of an $m$-simplex.

Recall that if $P_1, \dots, P_j$ are polytopes in ${\mathbb R}^{m+1}$, then the \emph{Minkowski sum} is defined to be the polytope
\[P_1 + \cdots +P_j := \big\{x_1 + \cdots + x_j: x_i \in P_i \big\} \subseteq {\mathbb R}^{m+1}.\]
\noindent
Here we will restrict ourselves to the case of $d \Delta_{m}$, the $d$--fold Minkowski sum of $m$-simplices.

To describe our desired subdivisions, we follow \cite{AB} for some definitions and notation.  We define a \emph{fine mixed cell} $X \subseteq d \Delta_{m}$ to be a Minkowski sum $B_1 + \cdots + B_d$, where the $B_i$ are faces of $\Delta_{m}$ which lie in independent affine subspaces, and whose dimensions add up to $m$.  A \emph{fine mixed subdivision} of $d \Delta_{m}$ is then a subdivision of $d \Delta_{m}$ consisting of fine mixed cells.

Now we fix integers $d$ and $n$, let $m= n - d$, and as above let $K^d_n$ denote the complete $d$--graph on $n$ vertices.  We will construct a mixed subdivision $X_{d,n}$ of $d \Delta_{m}$ whose 0-dimensional cells naturally correspond to the vertices of our original complex $X_{K^d_n}$.  For this, it will be convenient to use the following auxiliary construction.  As above we use $\{e_1, \dots, e_{m+1}\}$ to denote the vertices of the simplex $\Delta_{m}$ and consider fine mixed cells of the following kind.  Given a sequence  $(b_1, b_2, \dots, b_{d+1})$ satisfying $1 = b_1 \leq b_2 < \cdots < b_{d} \leq b_{d+1}=m+1$ we let $B_i := \{e_{b_i},e_{b_i+1},\dots,e_{b_{i+1}}\}$ for $1 \leq i \leq d$, and use $(b_1,b_2,\dots,b_{d+1})$ to denote the corresponding (fine) mixed cell $B_1 + B_2 + \cdots + B_d$ of $d \Delta_{m}$.

\begin{example}\label{ex:52}
For $n = 5$, $d=2$, we have $m=3$ so that our complex will be a certain mixed subdivision of $\Delta_3 + \Delta_3$.  The maximal cells of the subdivision are encoded by the sequences $(1,1,4)$, $(1,2,4)$, $(1,3,4)$, and $(1,4,4)$, and each of these correspond to a fine mixed cell depicted in Figure~\ref{fig:k52s}.
\end{example}

\begin{figure}[ht]
\begin{center}
\includegraphics[scale=0.5]{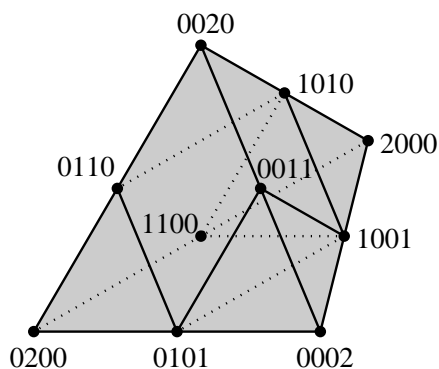}
\caption{Subdivision of $2 \Delta_3$, for $n=5$ and $d=2$.  The vertices are labeled by their realizations as Minkowski sums, so that for instance $0110 = e_2+e_3$.}\label{fig:k52s}
\end{center}
\end{figure}

\begin{example}
For $n = 5$, $d = 3$, we have $m = 2$ so that our complex will be a mixed subdivision of $\Delta_2 + \Delta_2 + \Delta_2$.  The maximal cells of the subdivision are encoded by the sequences $(1,1,1,3)$, $(1,1,2,3)$, $(1,1,3,3)$, $(1,2,2,3)$, $(1,2,3,3)$, and $(1,3,3,3)$, which correspond to the six cells in the subdivision pictured in Figure~\ref{fig:k53s}.
\end{example}

\begin{figure}[ht]
\begin{center}
\includegraphics[scale=0.5]{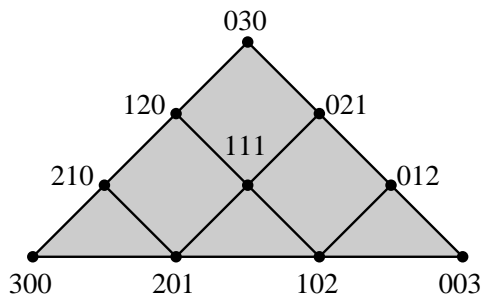}
\caption{Subdivision of $3 \Delta_2$, for $n=5$ and $d=3$.}\label{fig:k53s}
\end{center}
\end{figure}

We claim that the collection of fine mixed cells $\{(b_1,b_2,\dots,b_{d+1}): 1 = b_1 \leq b_2 < \cdots < b_{d} \leq b_{d+1} = m+1 \}$ forms a fine mixed subdivision of the complex $d \Delta_{m}$.  One way to see this is to employ the \emph{Cayley trick}, which (in this special case) gives a bijection between the set of mixed subdivisions of $d \Delta_{m}$ and the \emph{triangulations} of the product of simplices $\Delta_{d-1} \times \Delta_{m}$.  Under this bijection the mixed subdivision that we are describing here can be seen to correspond to the `staircase' triangulation of $\Delta_{d-1} \times \Delta_m$.  We omit the details here, and refer to \cite{AB} and \cite{DJS} for further discussion regarding the Cayley trick and the staircase triangulation.  We record this observation here.

\begin{lemma}
The collection of fine mixed cells $\{(b_1,b_2,\dots,b_m): 1 = b_1 \leq b_2 < \cdots < b_{m-1} \leq b_m = m+1 \}$
forms a fine mixed subdivision of the complex $d \Delta_{m}$.
\end{lemma}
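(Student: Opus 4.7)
The plan is to invoke the Cayley trick, which in this setting provides a bijection between fine mixed subdivisions of $d\Delta_m$ and triangulations of the product of simplices $\Delta_{d-1} \times \Delta_m$, and then to identify our prescribed collection of mixed cells as the image of the well-studied \emph{staircase triangulation} under this bijection.

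First I would recall the staircase triangulation of $\Delta_{d-1} \times \Delta_m$. Its vertices are the lattice points $(i,j)$ with $i \in \{1,\ldots,d\}$ and $j \in \{1,\ldots,m+1\}$, and its maximal simplices are in bijection with monotone lattice paths from $(1,1)$ to $(d,m+1)$ that move either right or up by one unit at each step. Such a path is determined precisely by recording, for each $i \in \{1,\ldots,d\}$, the range of $j$-values it visits at height $i$; this range has the form $[b_i, b_{i+1}]$, where the $b_i$ form a sequence $1 = b_1 \leq b_2 \leq \cdots \leq b_d \leq b_{d+1} = m+1$. The strictness condition $b_i < b_{i+1}$ in the middle (with equality allowed only at the two endpoints of the sequence) reflects the fact that the path must advance in $j$ between height increases, matching exactly the conditions defining the sequences in the lemma.

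Next I would apply the Cayley trick dictionary (in the form used in \cite{AB, DJS}): a simplex on vertex set $\{(i_1,j_1),\ldots,(i_k,j_k)\} \subseteq \Delta_{d-1} \times \Delta_m$ corresponds to the mixed cell $B_1 + \cdots + B_d$ of $d\Delta_m$, where $B_i = \conv\{e_j : (i,j) \text{ is a vertex of the simplex}\}$. Applying this to a maximal staircase simplex yields precisely the mixed cell $(b_1,\ldots,b_{d+1})$ of the lemma: $B_i = \conv\{e_{b_i}, e_{b_i+1}, \ldots, e_{b_{i+1}}\}$, so $\dim B_i = b_{i+1} - b_i$ and $\sum_i \dim B_i = b_{d+1} - b_1 = m$, as required for a fine mixed cell.

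The main obstacle, and really the only substantive verification, is to confirm that the affine spans of the $B_i$ are independent so that the Minkowski cell has dimension $m$, and that distinct cells of the collection meet only along common faces of the form $(b_1,\ldots,b_{d+1})$ arising from sub-paths. The first point follows because consecutive $B_i$ and $B_{i+1}$ share at most the single vertex $e_{b_{i+1}}$, so their affine hulls are in general position, and more generally a sum of faces of $\Delta_m$ is nondegenerate precisely when the faces are pairwise interior-disjoint (which is the case here). The coherence condition, that cells glue along faces, is inherited directly from the corresponding gluing in the staircase triangulation, which is simplicial. Combined with the fact that the staircase triangulation is a bona fide triangulation of $\Delta_{d-1}\times\Delta_m$ (so the corresponding mixed cells cover $d\Delta_m$ exactly once in the interior), this establishes the lemma.
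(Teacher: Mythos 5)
Your route is the same as the paper's: the authors establish this lemma by invoking the Cayley trick and identifying the given collection of cells with the image of the staircase triangulation of $\Delta_{d-1}\times\Delta_m$, and they explicitly omit all further details, deferring to \cite{AB} and \cite{DJS}. You fill in some of what they omit (and silently correct the garbled indices $b_m$, $b_{m-1}$ in the displayed statement to $b_{d+1}$, $b_d$, which is right). Most of the expansion is fine, but two of the details you supply are incorrect.

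First, the assertion that ``the path must advance in $j$ between height increases'' is false: a monotone lattice path from $(1,1)$ to $(d,m+1)$ may take consecutive vertical steps, in which case $b_i=b_{i+1}$ for an interior index $i$. The correct correspondence yields all weakly increasing sequences $1=b_1\leq b_2\leq\cdots\leq b_{d+1}=m+1$, of which there are $\binom{m+d-1}{d-1}$, exactly the number of maximal simplices in any triangulation of $\Delta_{d-1}\times\Delta_m$; the paper's own example for $d=3$, $m=2$ lists the cells $(1,1,1,3)$ and $(1,2,2,3)$, which violate strictness. The strict interior inequalities in the statement are a typo you should correct rather than manufacture a justification for --- if they were genuinely strict, the listed cells would fail to cover $d\Delta_m$ and the lemma would be false as written. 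Second, the criterion ``a sum of faces of $\Delta_m$ is nondegenerate precisely when the faces are pairwise interior-disjoint'' is wrong in general: in $\Delta_2$ the three edges $\conv\{e_1,e_2\}$, $\conv\{e_2,e_3\}$, $\conv\{e_1,e_3\}$ are pairwise interior-disjoint, yet their Minkowski sum is only $2$-dimensional while the dimensions of the summands add to $3$. What actually makes the cells $(b_1,\dots,b_{d+1})$ fine is that the supports $S_i=\{b_i,\dots,b_{i+1}\}$ are consecutive intervals meeting only in single endpoints, so that the dimension of the sum, which equals $|\bigcup_i S_i|$ minus the number of connected components of the overlap graph of the $S_i$, is $(m+1)-1=m=\sum_i(b_{i+1}-b_i)=\sum_i\dim B_i$. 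With these two local repairs your argument is a correct expansion of the proof the paper leaves to the references.
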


For $m = n - d$, we use $X_{d,n}$ to denote this mixed subdivision.

\subsection{The complexes $X_H$ as mixed subdivisions}\label{subsec:mixed}

As Minkowski sums of the underlying simplex, the vertices of the mixed subdivision $X_{d,n}$ described above are labeled by all monomials of degree $d$ among the variables $\{x_1, \dots, x_{m+1}\}$, where for instance the vertex $e_1 + e_3 + e_3$ is labeled $x_1x_3^2$ (see Figure \ref{fig:k52s}).  In fact these complexes support cellular resolutions of the $d$--th power of the maximal ideal $(x_1, \dots, x_{m+1})^d$ (see \cite{DJS} for a proof of this as well as further discussion).  Here we are interested in the associated squarefree ideal and for this we relabel our complex according to the following well-known bijection between $d$-multisubsets of $[m+1]$ with $d$-subsets of $[m+d]$.

Each monomial of degree $d$ on the vertices $\{x_1, \dots, x_{m+1}\}$ can be thought of as a vector $\alpha \in {\mathbb N}^{m+1}$ with nonnegative coordinates $\alpha_i$ such $\sum \alpha_i = d$.  The nonzero entries of this vector determine a multiset $\{i_1, i_2, \dots, i_d\}$ with $i_1 \leq i_2 \leq \cdots \leq i_d$, where the exponent of $x_j$ gives the number of occurrences of $j$.  To each multiset of this kind we associate a set according to
\[\{i_1, \dots, i_d\} \mapsto \{i_1, i_2+1, \dots, i_d + d - 1 \}.\]
The resulting set has $d$ (distinct) nonzero elements of maximum size $m + 1 + d - 1 = n$, and hence this assignment labels the vertices of $X_{d,m}$ with squarefree monomials corresponding to the edges of the complete hypergraph $K_n^d$.  We note that this relabeling is equivalent to the `polarizations' described by Nagel and Reiner as discussed in Remark \ref{rem:otherideals}.

\begin{example}
In Figure~\ref{fig:x25} we see the complex $X_{2,5}$ from Example~\ref{ex:52}, with vertex labels given by the edges of the graph $K^2_5$.
\end{example}

\begin{figure}[ht]
\begin{center}
\includegraphics[scale=0.5]{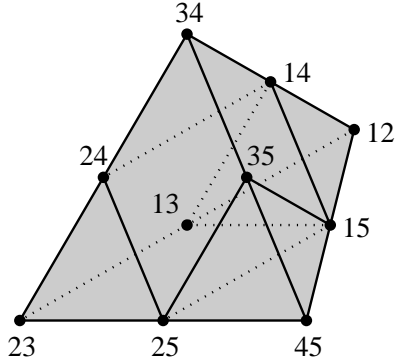}
\caption{The complex $X_{2,5}$, with vertex labels given by the edges of $K^2_5$.}\label{fig:x25}
\end{center}
\end{figure}

Now, if $H$ is a $d$-graph with vertex set $[n]$, we obtain a subcomplex $X_{d,n}[H]$ of the mixed subdivision $X_{d,n}$ by considering the subcomplex induced by those vertices corresponding to the edges of $H$.  We then obtain the following observation.

\begin{prop}\label{prop:embedding}
For any $d$-graph $H$ with vertex set $V(H) = [n]$, the complex $X_{d,n}[H]$ described above is isomorphic as a cell complex to $X_H$.  In particular the complex $X_H$ can be realized as a subcomplex of a mixed subdivision of the dilated simplex $d \Delta_{n-d}$.
\end{prop}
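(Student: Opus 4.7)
The plan is to construct an explicit cellular isomorphism $\phi : X_H \to X_{d,n}[H]$ by sending a cell $\sigma_1 \times \sigma_2 \times \cdots \times \sigma_d$ of $X_H$ to the Minkowski sum $B_1 + B_2 + \cdots + B_d$, where $B_k := \{e_{v-(k-1)} : v \in \sigma_k\}$ is the set of vertices of $\Delta_m = \Delta_{n-d}$ obtained by shifting $\sigma_k$ down by $k-1$. In particular, on vertices $\phi$ sends $v_1 \times \cdots \times v_d$ to $e_{v_1} + e_{v_2-1} + \cdots + e_{v_d-(d-1)}$.

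First I would verify that $\phi$ is well defined, that is, that $B_1 + \cdots + B_d$ is a face of the staircase mixed subdivision $X_{d,n}$. A face of that subdivision is a Minkowski sum of faces of $\Delta_m$ satisfying the staircase compatibility condition $\max B_k \leq \min B_{k+1}$ for all $k$. The condition $\max \sigma_k < \min \sigma_{k+1}$ on finite integer subsets is equivalent to $\max \sigma_k + 1 \leq \min \sigma_{k+1}$, and subtracting the shifts $k-1$ and $k$ respectively converts this into $\max B_k \leq \min B_{k+1}$. The same calculation, run in reverse via $B_k \mapsto \sigma_k := B_k + (k-1)$, supplies the inverse correspondence, so $\phi$ is a bijection between the cells of $X_H$ sitting inside $\prod \Delta_V$ and the cells of the staircase mixed subdivision $X_{d,n}$ that arise this way.

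Second I would check that vertex labels match. Under $\phi$ a vertex $v_1 \times \cdots \times v_d$ with $v_1 < \cdots < v_d$ goes to the multiset $(i_1, \ldots, i_d)$ with $i_k = v_k - (k-1)$, which satisfies $i_1 \leq i_2 \leq \cdots \leq i_d$. Applying the multiset-to-set bijection $\{i_1, \ldots, i_d\} \mapsto \{i_1, i_2 + 1, \ldots, i_d + (d-1)\}$ from Section~\ref{sec:mixed} recovers precisely $\{v_1, \ldots, v_d\}$, the edge of $H$ that labelled the original vertex. Consequently, the condition that a cell $B_1 + \cdots + B_d$ lies in the induced subcomplex $X_{d,n}[H]$, namely that all its vertices are labelled by edges of $H$, translates exactly into the defining condition for a cell of $X_H$, namely that every choice of singletons $v_1 \times \cdots \times v_d$ with $v_k \in \sigma_k$ is an edge of $H$.

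Finally, the face relation is visibly preserved: $\sigma_k' \subseteq \sigma_k$ for all $k$ if and only if $B_k' \subseteq B_k$ for all $k$, which is exactly the face relation in the mixed subdivision. The main (and really only) obstacle is the first step; once the dictionary between the strict-ordering condition $\sigma_k < \sigma_{k+1}$ and the staircase compatibility $\max B_k \leq \min B_{k+1}$ is established, everything else is bookkeeping.
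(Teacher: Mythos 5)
Your proposal is correct and is essentially the paper's own argument: the map $\sigma_k \mapsto B_k = \sigma_k - (k-1)$ is exactly the inverse of the paper's stated isomorphism $(b_1,\dots,b_{d+1}) \mapsto \sigma_1 \times \cdots \times \sigma_d$ with $\sigma_i = \{b_i + i - 1, \dots, b_{i+1} + i - 1\}$, and you have simply filled in the bookkeeping (staircase compatibility, label matching, face relations) that the paper leaves to the reader.
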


\begin{proof}
We have seen that the vertices of both complexes can be identified.  One checks that this induces a polyhedral isomorphism $X_{d,n}[H] \rightarrow X_H$ which maps a mixed cell $(b_1, b_2, \dots, b_{d+1})$ to $\sigma_1 \times \sigma_2 \times \cdots \times \sigma_d$, where $\sigma_i = \{b_i+i-1,b_i +i, \dots, b_{i+1}+i-1\}$, for $1 \leq i \leq d$.
\end{proof}

\begin{cor}\label{cor:mixedsub}
If $H$ is a cointerval $d$-graph, then the edge ideal $I_H$ has a minimal cellular resolution supported on a subcomplex of a mixed subdivision of a dilated simplex.
\end{cor}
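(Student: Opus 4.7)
The plan is to combine the two previously established results directly. By Theorem~\ref{thm:resCoInt}, since $H$ is cointerval, the labeled polyhedral complex $X_H$ supports a minimal cellular resolution of the edge ideal $I_H$. By Proposition~\ref{prop:embedding}, the complex $X_H$ is isomorphic as a cell complex to the subcomplex $X_{d,n}[H]$ of the mixed subdivision $X_{d,n}$ of the dilated simplex $d\Delta_{n-d}$. So the first step is simply to invoke these two results and observe that the desired conclusion follows immediately once we verify that the isomorphism of Proposition~\ref{prop:embedding} respects the monomial labels used in Theorem~\ref{thm:resCoInt}.

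The second step is therefore to check the label compatibility. Under the bijection sending a multiset $\{i_1\leq i_2\leq\cdots\leq i_d\}$ to the set $\{i_1,i_2+1,\ldots,i_d+d-1\}$, the vertex label $x_{i_1}x_{i_2}\cdots x_{i_d}$ of $X_{d,n}$ is replaced by the squarefree monomial corresponding to a $d$-subset of $[n]$; restricting to those vertices whose relabeled monomial lies among the generators of $I_H$ gives $X_{d,n}[H]$. On the other hand, the vertex labels of $X_H$ by construction are exactly the squarefree monomial generators of $I_H$ (see Remark~\ref{rem:labelcomplex}). The polyhedral isomorphism in the proof of Proposition~\ref{prop:embedding} sends a vertex of $X_{d,n}[H]$ to the corresponding edge-vertex of $X_H$, matching labels, and higher-dimensional faces inherit their labels as the least common multiple of their vertex labels, so the labeled cell complexes agree.

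Finally, the third step is to transport the resolution along this labeled isomorphism: since $X_H$ supports a minimal cellular resolution of $I_H$, so does the isomorphic labeled complex $X_{d,n}[H]$, which by construction sits as a subcomplex of a mixed subdivision of $d\Delta_{n-d}$. This is exactly the conclusion of Corollary~\ref{cor:mixedsub}.

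There is no real obstacle; the only thing to be careful about is the label matching, which is essentially tautological given how the relabeling in Section~\ref{subsec:mixed} was defined to convert multisubsets of $[n-d+1]$ into $d$-subsets of $[n]$. Consequently this corollary is essentially a translation of Theorem~\ref{thm:resCoInt} through the embedding of Proposition~\ref{prop:embedding}, and can be stated in one or two sentences.
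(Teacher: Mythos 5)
Your proposal is correct and matches the paper's (implicit) argument: the corollary is stated there without proof precisely because it follows immediately by combining Theorem~\ref{thm:resCoInt} with Proposition~\ref{prop:embedding}. Your extra care in checking that the isomorphism respects the monomial labels is a reasonable elaboration of the same one-line deduction, not a different route.
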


\begin{rem}
In \cite{DJS} it is shown that \emph{any} regular fine mixed subdivision of the dilated simplex $d \Delta_m$ supports a minimal cellular resolution of the ideal $\langle x_1, \dots, x_{m+1} \rangle^d$.  Hence it is a natural question to ask whether any fine mixed subdivision can be used in the construction of resolutions of edge ideals of hypergraphs.  In fact this is not the case, as the following example illustrates.  The particularly well-behaved properties of the staircase subdivision are really necessary here.
\end{rem}

\begin{figure}[ht]
\begin{center}
  \includegraphics[scale=0.5]{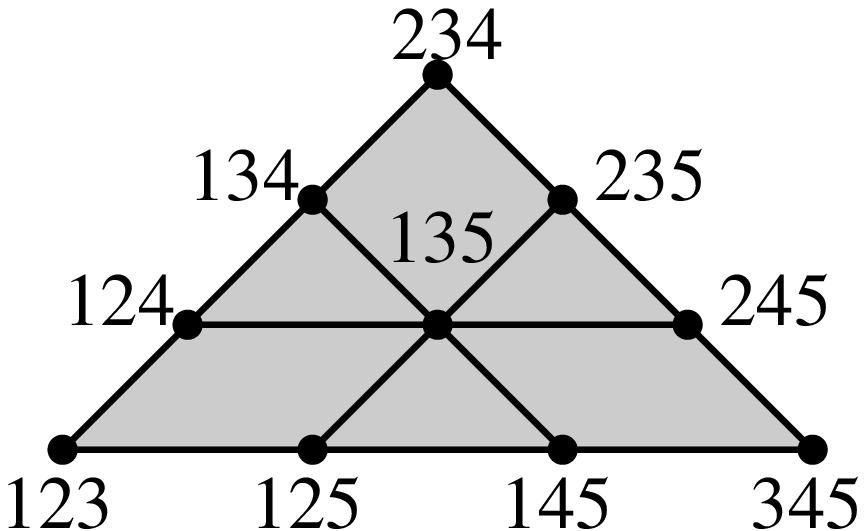} \quad \quad \quad
  \includegraphics[scale=0.5]{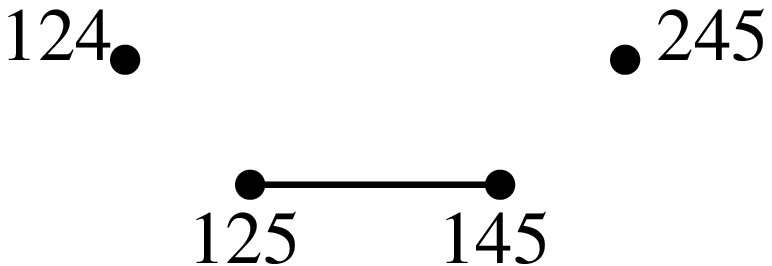}

  \caption{A fine mixed subdivision $Y$, and a disconnected downset $Y_{\leq 1245}$}
  \end{center}
\end{figure}

\section{Constructing resolutions of more general graphs} \label{sec:gluing}

Not all hypergraphs are cointerval (some examples are below) and in this section we discuss methods for building cellular resolutions for more general $d$-graphs.  The basic idea will be to decompose an arbitrary $d$-graph $H$ as a union of cointerval $d$-graphs, and to glue together the associated complexes considered above.

\begin{thm}\label{thm:decompose}
Let $H_1, H_2, \ldots, H_n$ be $d$-graphs on the same vertex set $W$. For all $i=1,2,\ldots, n$ assume that there is a cellular
resolution of $I_{H_1}$ supported by the polyhedral complex $X_i$ with vertices labeled by square-free $\ell_i$.  Assume that the higher dimensional cells are labeled by the least common multiple of their vertices.

If $H=H_1 \cup H_2 \cup \cdots \cup H_n$, $X=X_1 \ast X_2 \ast \cdots \ast X_n$, and $\ell( \sigma )=\textrm{lcm}( \ell_1( \sigma ), \ell_2( \sigma ), \ldots, \ell_n(\sigma) )$, then the complex $X$ with labels $\ell$ supports a cellular resolution of $I_H$.
\end{thm}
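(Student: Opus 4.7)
The plan is to apply the acyclicity criterion of Proposition~\ref{prop:cellres}, interpreting $X_1 \ast X_2 \ast \cdots \ast X_n$ as the iterated (polyhedral) join. A nonempty face of $X$ is then a formal join $\sigma_1 \ast \cdots \ast \sigma_n$ in which each $\sigma_i$ is either a face of $X_i$ or empty (with at least one nonempty), and its label is $\lcm(\ell_1(\sigma_1), \ldots, \ell_n(\sigma_n))$ under the convention $\ell_i(\emptyset) = 1$. The vertices of $X$ form the disjoint union $V(X_1) \sqcup \cdots \sqcup V(X_n)$, whose labels are precisely the edge monomials of $H_1, \ldots, H_n$ and therefore generate $I_H = I_{H_1} + \cdots + I_{H_n}$. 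One also checks that $\ell(\sigma)$ equals the lcm of the vertex labels of $\sigma$, so the labeling on $X$ is of the form required by Proposition~\ref{prop:cellres}.

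The heart of the argument is the identity
\[
X_{\leq \alpha} \;=\; (X_1)_{\leq \alpha} \ast (X_2)_{\leq \alpha} \ast \cdots \ast (X_n)_{\leq \alpha},
\]
valid for every $\alpha \in \mathbb{N}^{|W|}$; since all labels are squarefree it suffices to take $\alpha \in \{0,1\}^{|W|}$. Indeed $\lcm(\ell_1(\sigma_1), \ldots, \ell_n(\sigma_n))$ divides $\alpha$ if and only if each $\ell_i(\sigma_i)$ divides $\alpha$. By hypothesis each $X_i$ supports a cellular resolution of $I_{H_i}$, so Proposition~\ref{prop:cellres} applied in the reverse direction tells us that every factor $(X_i)_{\leq \alpha}$ is either empty or $k$-acyclic.

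Finally I would invoke the K\"unneth-style formula for the reduced homology of a join of finite CW complexes,
\[
\tilde{H}_n(K \ast L;\, k) \;\cong\; \bigoplus_{i+j=n-1} \tilde{H}_i(K;\, k) \otimes_k \tilde{H}_j(L;\, k),
\]
with the usual convention $\tilde{H}_{-1}(\emptyset;\, k) = k$ so that $K \ast \emptyset$ has the same reduced homology as $K$. If at least one factor $(X_i)_{\leq \alpha}$ is nonempty and acyclic, iterating this formula forces $X_{\leq \alpha}$ to be acyclic; if every factor is empty, then the join itself is empty. In either case Proposition~\ref{prop:cellres} applies, and we conclude that $\mathcal{F}_X$ is a cellular resolution of $I_H$.

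The main obstacle is really just bookkeeping: keeping the empty versus nonempty factors straight in the iterated join, and confirming that the polyhedral join of labeled complexes is itself a labeled polyhedral complex (one can reduce to the simplicial setting via barycentric subdivision if preferred). No new topological input is needed beyond the standard join formula and Proposition~\ref{prop:cellres}.
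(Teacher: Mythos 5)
Your proposal is correct and follows essentially the same route as the paper: both establish the key identity $X_{\leq \alpha} = (X_1)_{\leq \alpha} \ast \cdots \ast (X_n)_{\leq \alpha}$ and then deduce acyclicity of the join from acyclicity of a nonempty factor. You simply make explicit the join--K\"unneth step and the empty-factor conventions that the paper leaves implicit.
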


\begin{proof}
Let the square-free monomial $\alpha$ support an edge of $H$. We will prove that $X_{\leq \alpha}$ is acyclic.  For this consider
\[  \begin{array}{rcl}
X_{\leq \alpha} & = & \{ \sigma \in X \mid \ell(\sigma) \leq \alpha \} \\
& = & \{ \sigma_1 \ast \cdots \ast \sigma_n \in X \mid   \sigma_1 \in X_1, \ldots,  \sigma_n \in X_n,   \ell(\sigma_1 \ast \cdots \ast \sigma_n) \leq \alpha \} \\
& = & \{ \sigma_1 \ast \cdots \ast \sigma_n \in X \mid   \sigma_1 \in X_1, \ell_1(\sigma_1)\leq \alpha, \ldots,  \sigma_n \in X_n,   \ell_n(\sigma_n) \leq \alpha \} \\
& = & \{ \sigma_1 \ast \cdots \ast \sigma_n \in X \mid   \sigma_1 \in (X_1)_{\leq \alpha}, \ldots, \sigma_2 \in (X_2)_{ \leq \alpha} \} \\
& = & (X_1)_{\leq \alpha} \ast \cdots \ast (X_n)_{\leq \alpha}
\end{array} \]
At least one of the $(X_i)_{\leq \alpha}$ is non-empty, and thus acyclic.  Hence we conclude that $X_{\leq \alpha}$ is also acyclic.
\end{proof}

The most basic example of Theorem~\ref{thm:decompose} recovers what is known as the `Taylor resolution' of $I_H$.  For this note that a hypergraph $H_i$ consisting of a single edge has a cellular resolution supported by a point. The join of $|E(H)|$ points is a $(|E(H)|-1)$--dimensional simplex supporting the resolution of $I_H$.

\begin{defn}
The \emph{linear width} of a $d$--graph $H$, denoted $\omega_{\tt lin}(H)$, is the smallest number $k$ such that $H=H_1 \cup H_2 \cup \cdots \cup H_k$, with each $H_i$ a cointerval $d$--graph.
\end{defn}

The linear width is well-defined and $\omega_{\tt lin}(H) \leq |E(H)|$ since any hypergraph with one edge is a cointerval hypergraph.
We have chosen the name \emph{linear width} since for 2-graphs it is closely related to the path-width \cite{RS} and band-width \cite{Bou,Feige},
and if the linear width of $H$ is one, the ideal $I_H$ has a linear resolution.

Bourgain \cite{Bou} and Feige \cite{Feige} have developed rather general theories regarding modifying combinatorial objects to obtain `perfect elimination orders'.  If these ideas apply to decomposing hypergraphs into cointerval hypergraphs, then we expect the linear width to grow rather slowly.  In fact we conjecture that for any a fixed $d$ there is a constant $C_d$ such that $\omega_{\tt lin}(H) < C_d n$ for any $d$-graph on $n$ vertices.  A solution to this conjecture would give new general bounds on graded Betti numbers of hypergraph edge ideals.

In this paper all monomial ideals are generated in a fixed degree $d$, but there is a generalization of the previous theorem to the corresponding non-uniform hypergraph case, since we never used that the edges are of the same order in the proof.

\subsection{A case study: 3-graphs on at most 5 vertices}\label{sec:casestudy}

In this section we study (unlabeled) 3-graphs on at most 5 vertices. There are 34 of them.  With an exhaustive computer search we find that 26 of these are cointerval under suitable labelings (the first 26 in the list below), 10 of which are \emph{not} strongly stable (graphs 7,10,11,17,19,21,22,23,25,26).  The number of strongly stable graphs (16) is verified by the enumerative results presented in Theorem 3 of \cite{Kli}.

\subsubsection{The cointerval 3-graphs on 5 vertices}

\begin{figure}
\begin{center}
\makebox[12cm][c]{
\begin{tabular}{ccccc}
\includegraphics[scale=0.75]{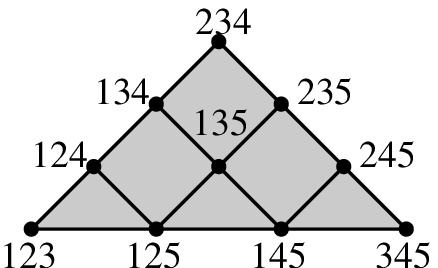} &
\includegraphics[scale=0.75]{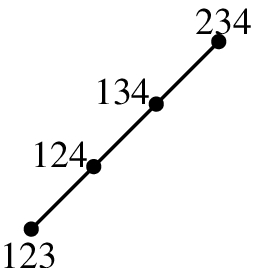} &
\includegraphics[scale=0.75]{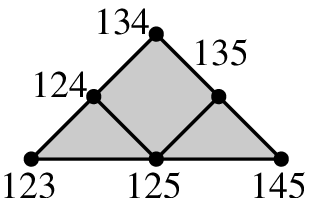} &
\includegraphics[scale=0.75]{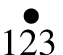} &
\includegraphics[scale=0.75]{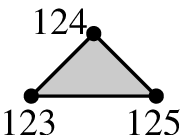} \\
Graph 2 & Graph 3 & Graph 4 & Graph 5 & Graph 6 \\
\\
\includegraphics[scale=0.75]{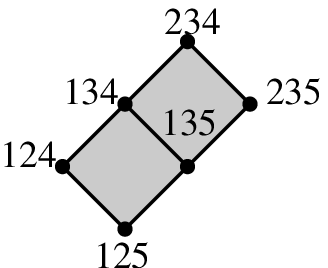} &
\includegraphics[scale=0.75]{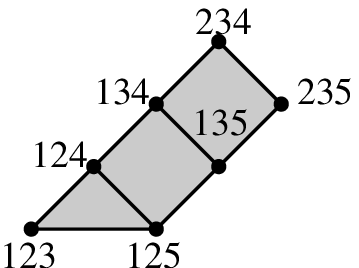} &
\includegraphics[scale=0.75]{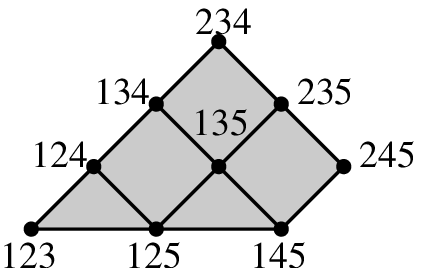} &
\includegraphics[scale=0.75]{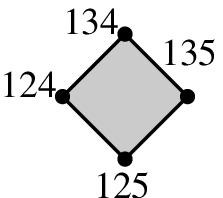} &
\includegraphics[scale=0.75]{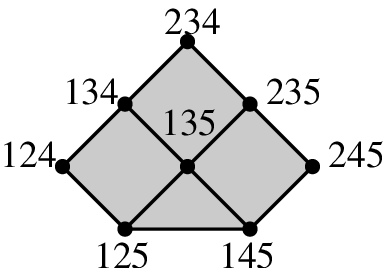} \\
Graph 7 & Graph 8 & Graph 9 & Graph 10 & Graph 11 \\
\\
\includegraphics[scale=0.75]{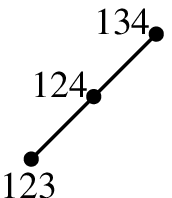} &
\includegraphics[scale=0.75]{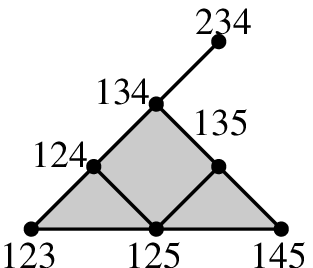} &
\includegraphics[scale=0.75]{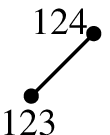} &
\includegraphics[scale=0.75]{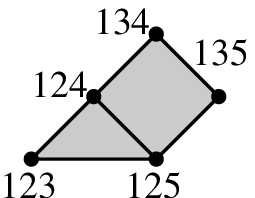} &
\includegraphics[scale=0.75]{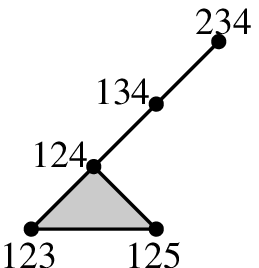} \\
Graph 12 & Graph 13 & Graph 14 & Graph 15 & Graph 16 \\
\\
\includegraphics[scale=0.75]{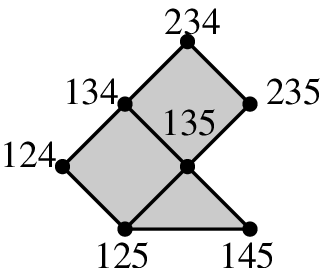} &
\includegraphics[scale=0.75]{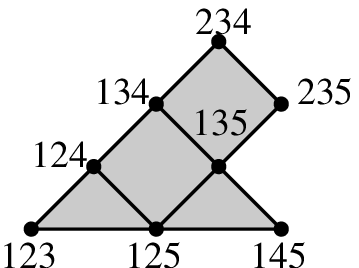} &
\includegraphics[scale=0.75]{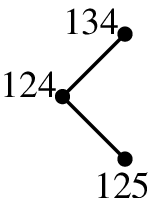} &
\includegraphics[scale=0.75]{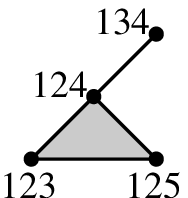} &
\includegraphics[scale=0.75]{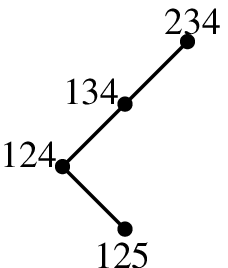} \\
Graph 17 & Graph 18 & Graph 19 & Graph 20 & Graph 21 \\
\\
\includegraphics[scale=0.75]{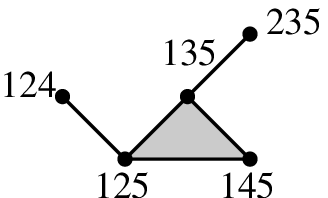} &
\includegraphics[scale=0.75]{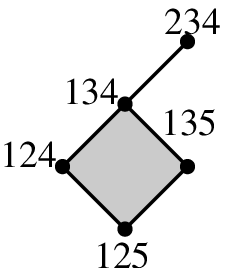} &
\includegraphics[scale=0.75]{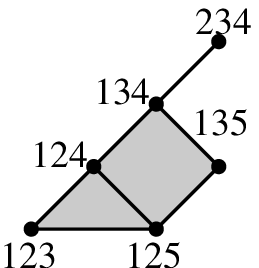} &
\includegraphics[scale=0.75]{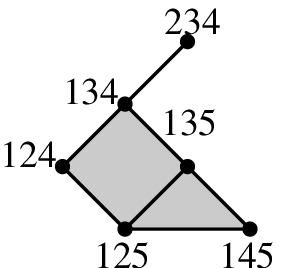} &
\includegraphics[scale=0.75]{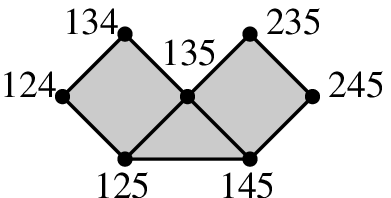} \\
Graph 22 &Graph  23 & Graph 24 & Graph 25 &Graph  26 \\
\end{tabular}}
\caption{Cellular minimal resolutions for cointerval graphs 2-26.}\label{fig:min35}
\end{center}
\end{figure}

In Figure~\ref{fig:min35} we see minimal cellular resolutions of the cointerval 3-graphs on at most 5 vertices (Graph 1 is the empty graph).  The graphs themselves can of course be recovered by recording the labels on the 0-cells.

\begin{figure}[ht]
\begin{center}
\makebox[0cm][c]{
\begin{tabular}{cc}
\includegraphics[scale=0.75]{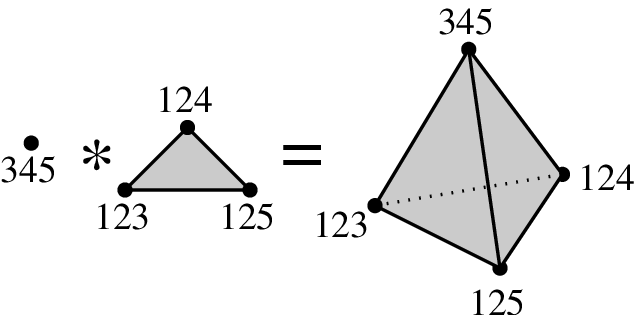} &
\includegraphics[scale=0.75]{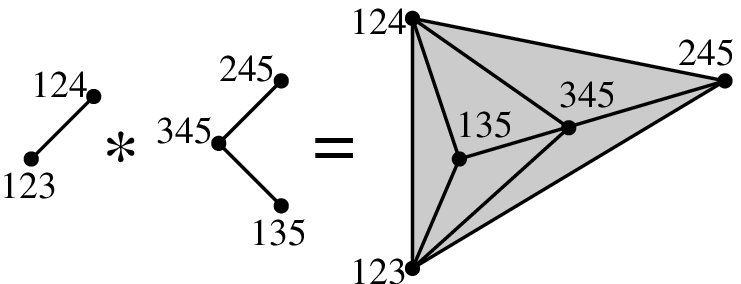} \\
Graph 27 & Graph 28\\
\\
\includegraphics[scale=0.75]{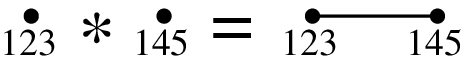} &
\includegraphics[scale=0.75]{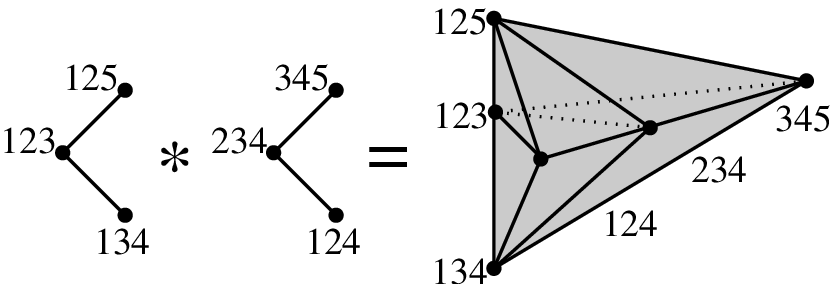} \\
Graph 29 & Graph 30\\
\\
\includegraphics[scale=0.75]{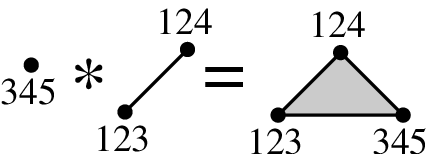} &
\includegraphics[scale=0.75]{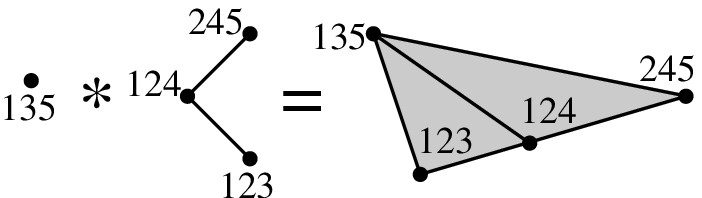} \\
Graph 31 & Graph 32\\
\\
\includegraphics[scale=0.75]{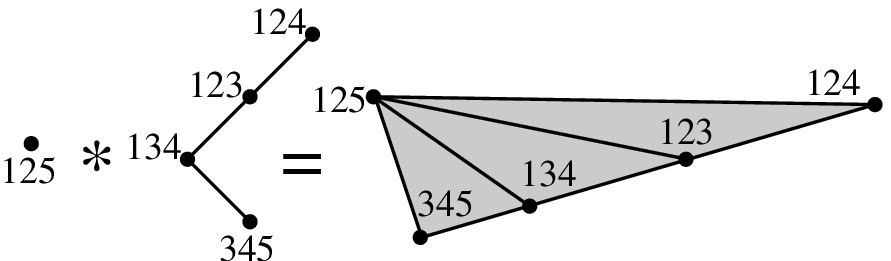} &
\includegraphics[scale=0.75]{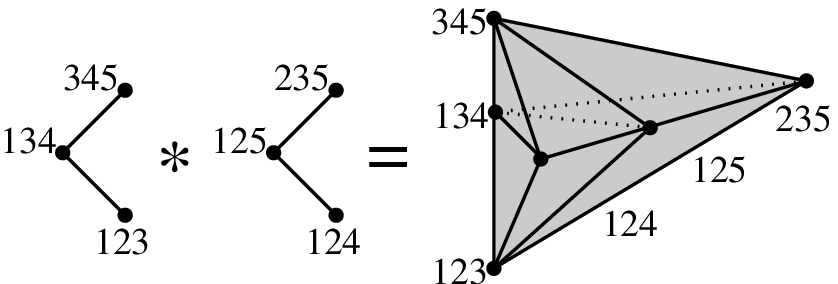} \\
Graph 33 & Graph 34\\
\end{tabular}}
\caption{Cellular resolutions for graphs 27-34 using Theorem~\ref{thm:decompose}}\label{fig:joins35}
\end{center}
\end{figure}

\subsubsection{The non-cointerval 3-graphs on 5 vertices.}

Next we turn to decompositions of non-cointerval graphs. Each of the graphs 27-34 are presented in Figure~\ref{fig:joins35} with a cellular resolution constructed as a join of minimal resolutions.  We point out that using only strongly stable subgraphs in a decomposition, graph 28 needs to be decomposed into three graphs.

\section{Further questions}\label{sec:further}

\subsection{A larger class of graphs}
As we have seen, for any $d$-graph $H$ the complex $X_H$ that we construct has the property that the dimension of a face $F$ is given by $i - d$, where $i$ is the total degree of the monomial label on $F$.  Hence whenever $X_H$ supports a resolution of $I_H$, it is $d$-linear.  It is a well known result of Fr\"{o}berg (see \cite{F}) that a 2-graph $H$ has a 2-linear resolution if and only $H$ is the complement of a \emph{chordal} graph.  Interval graphs (which correspond to cointerval 2-graphs) are a proper subset of chordal graphs, and in particular there exist graphs which are chordal but not interval. These include the graphs in Figure~\ref{fig:cNotI}.

\begin{figure}[ht]
\begin{center}
\includegraphics[width=80mm]{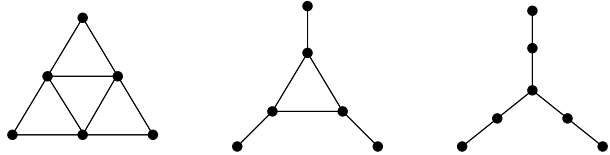}
\caption{Three graphs which are chordal but not interval.}\label{fig:cNotI}
\end{center}
\end{figure}

A natural question to ask is whether our complexes $X_H$ can be used to obtain resolutions of a more general class of graphs.  It turns out that our construction will \emph{not} work for the class of complements of chordal graphs.  In fact, if $G$ is taken to be the complement of the first graph in Figure~\ref{fig:cNotI} (which happens to be isomorphic to the second graph in that list), one can check that no labeling of the vertices with $\{1, \dots, 6\}$ induces a complex $X_G$ which supports a resolution.  However, it is still an open question to determine the largest class of graphs for which our construction do apply.  We note that the classes recently defined by Emtander \cite{Em} and Woodroofe \cite{W} could be good candidates.

\subsection{Functoriality and more general complexes}
Suppose that $G$ and $H$ are graphs on vertex sets $[m]$ and $[n]$, respectively.  One can check that if $f:G \rightarrow H$ is a directed graph homomorphism then there is an induced polyhedral map $f_*: X_G \rightarrow X_H$.  Furthermore, the map $f$ gives rise to a map $f:S_m \rightarrow S_n$, where $S_j := k[x_1,\dots, x_j]$, and hence gives $S_n$ (and in turn $I_H$) the structure of an $S_m$-module.  The polyhedral map $f_*$ then gives rise to a map of chain complexes of $S_m$-chain complexes.  This functoriality then gives rise to the possibility of applications, where for instance algebraic invariants such as Betti numbers can used to produce obstructions to the existence of graph homomorphisms, in the spirit of equivariant obstructions in the context of $\textrm{Hom}$ complexes.

As we discussed in Section \ref{sec:complex} the complexes $X_H$ can be viewed as special case of a more general complex of homomorphisms between directed graphs.  For this, suppose $T$ and $H$ are graphs with vertex sets $[m]$ and $[n]$, respectively.  The complex $X_{T,H} = \textrm{Hom}(T,H)$  parameterizes directed homomorphisms and, as above, give rise to a chain complex.  In this general case, the entries of the complex should no longer be considered as modules over the polynomial ring, but instead as modules over the DG-algebra $\textrm{Hom}(E,T)$ (where, as above, $E$ is the directed $m$-edge).  We see further development in this area as a subject for future work.

\end{document}